\theoremstyle{plain}
 \newtheorem{thm}{Theorem}[section]
 \newtheorem{thmx}{Theorem}
 \newtheorem{prop}[thm]{Proposition}
 \newtheorem{lem}[thm]{Lemma}
 \newtheorem{cor}[thm]{Corollary}
\theoremstyle{definition}
 \newtheorem{dfn}[thm]{Definition}
 \newtheorem{conv}[thm]{Convention}
\theoremstyle{remark}
 \newtheorem{rem}[thm]{Remark}
 \numberwithin{equation}{section}
\def\acts{\mathrel{\reflectbox{$\righttoleftarrow$}}}
\newcommand{\Q}{\mathbb{Q}}
\newcommand{\R}{\mathbb{R}}
\newcommand{\C}{\mathbb{C}} 	
\newcommand{\CP}{\mathbb{CP}}
\newcommand{\Z}{\mathbb{Z}}
\newcommand{\Hyp}{\mathbb{H}}
\renewcommand{\restriction}{\mathord{\upharpoonright}}
\newcommand{\deriv}{\left.\frac{d}{dt}\right\vert_{t=0}}
\newcommand\phantomarrow[2]{%
  \setbox0=\hbox{$\displaystyle #1\to$}%
  \hbox to \wd0{%
    $#2\mapstochar
     \cleaders\hbox{$\mkern-1mu\relbar\mkern-3mu$}\hfill
     \mkern-7mu\rightarrow$}%
  \,}
\DeclareMathOperator{\rot}{rot}
\DeclareMathOperator{\vol}{vol}
\DeclareMathOperator{\Hom}{Hom}
\DeclareMathOperator{\Rep}{Rep}
\DeclareMathOperator{\Ad}{Ad}
\DeclareMathOperator{\Aut}{Aut}
\DeclareMathOperator{\fix}{fix}
\DeclareMathOperator{\psl}{PSL(2,\R)}
\DeclareMathOperator{\SU2}{SU(2)}
\DeclareMathOperator{\PMCG}{Mod}
\newcommand{\E}{\mathcal{E}}
\newcommand{\surface}{\Sigma_{g,n}}
\newcommand{\smreplong}{\textnormal{Rep}^{\textnormal{\tiny{DT}}}_\alpha(\Sigma_n,\textnormal{PSL}(2,\R))\xspace}
\newcommand{\smrep}{\textnormal{Rep}^{\textnormal{\tiny{DT}}}_\alpha\xspace}
\newcommand{\co}{\mathfrak{c}}
\newcommand{\s}{\mathfrak{s}}
\renewcommand{\setminus}{\smallsetminus}
\title[Ergodicity and Deroin-Tholozan representations]{Ergodicity of the mapping class group action on Deroin-Tholozan representations}
\author[Arnaud Maret]{Arnaud Maret} 
\address{
Mathematisches Institut \\ 
Ruprecht-Karls-Universit\"at Heidelberg   \\ 
Germany}
\email{amaret@mathi.uni-heidelberg.de}
\begin{document}

\begin{abstract}
This note investigates the dynamics of the mapping class group action on compact connected components of relative character varieties of surface group representations into $\psl$, discovered by Deroin and Tholozan. We apply symplectic methods developed by Goldman and Xia to prove that the action is ergodic.
\end{abstract}

\maketitle

\section{Introduction}  

A character variety consists of conjugacy classes of representations of the fundamental group of a surface\footnote{A \textit{surface} is a two-dimensional manifold. A surface may have punctures. Unless otherwise stated, surfaces are orientable and have negative Euler characteristic.} $\Sigma$ into a Lie group $G$. Character varieties, or part of them, enjoy a natural symplectic structure provided that $G$ is, for instance, compact or semisimple. In this case, the mapping class group of $\Sigma$ acts by symplectomorphisms on the character variety (see Subsection \ref{sec:character_varieties}). The action is known to be ergodic if $G$ is compact, whereas its dynamical nature remains widely unknown if $G$ is not compact, as for instance $G=\psl$. It is nevertheless proven to be proper and discontinuous on the Teichm\"uller components of the $\psl$-character variety of a closed surface and conjectured to be ergodic on the remaining components (see Subsection \ref{sec:history}). This paper investigates the mapping class group action on some particularly nice components of the $\psl$-character variety of a punctured sphere.

\subsection{The result}

Let $n\geq 3$ be an integer and $\alpha=(\alpha_1,\ldots,\alpha_n)\in(0,2\pi)^n$ be an $n$-tuple of real numbers. We fix a collection $\{c_1,\ldots,c_n\}$ of generators of the fundamental group $\pi_1(\Sigma_{0,n})$ of the $n$-punctured sphere $\Sigma_{0,n}$ that satisfy the sole relation $c_1\cdot\ldots\cdot c_n=1$. Let $\Rep_\alpha(\Sigma_{0,n},\psl)$ denote the relative character variety of conjugacy classes of representations $\phi\colon \pi_1(\Sigma_{0,n})\to \psl$ for which $\phi(c_i)$ is an elliptic rotation of angle $\alpha_i$ for every $i$. Deroin-Tholozan proved in 
\cite{DeTh16} the existence of a nonempty compact connected component of $\Rep_\alpha(\Sigma_{0,n},\psl)$ whenever $\alpha_1+\ldots+\alpha_n>2\pi(n-1)$. We refer to these components as \textit{character variety of Deroin-Tholozan representations}\footnote{Deroin and Tholozan originally called these representations \emph{supra-maximal}. We prefer to call them \emph{Deroin-Tholozan} representations for the reasons explained in Remark \ref{rem:deroin-tholozan}.} denote them by
\[
\Rep^{\textnormal{\tiny{DT}}}_\alpha(\Sigma_{0,n},\psl)\subset \Rep_\alpha(\Sigma_{0,n},\psl).
\]
Let $\PMCG(\Sigma_{0,n})$ denote the group of isotopy classes of orientation-preserving homeomorphisms $\Sigma_{0,n}\to \Sigma_{0,n}$ that fix each puncture individually. Our main result is

\begin{thmx}\label{thm:ergodicity}
The action of $\PMCG(\Sigma_{0,n})$ on $\Rep^{\textnormal{\tiny{DT}}}_\alpha(\Sigma_{0,n},\psl)$ is ergodic with respect to the Goldman symplectic measure.
\end{thmx}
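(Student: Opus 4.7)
The plan is to apply the Goldman-Xia symplectic strategy: use a pants decomposition to exhibit an integrable system on $\Rep^{\textnormal{\tiny{DT}}}_\alpha(\Sigma_{0,n},\psl)$ whose action coordinates generate Dehn twists, then combine fibrewise density with transverse mixing from curves dual to the pants to force any invariant function to be constant almost everywhere.

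First I would fix a pants decomposition $\mathcal{P}=\{\gamma_1,\ldots,\gamma_{n-3}\}$ of $\Sigma_{0,n}$. A defining feature of Deroin-Tholozan representations is that the holonomy around any simple closed curve is elliptic; in particular, the rotation-angle function $\theta_i(\phi)\in(0,2\pi)$ associated to $\phi(\gamma_i)$ is smoothly defined on a full-measure open subset $U$ of $\Rep^{\textnormal{\tiny{DT}}}_\alpha(\Sigma_{0,n},\psl)$. Goldman's Poisson bracket formula for disjoint curves shows that $\theta_1,\ldots,\theta_{n-3}$ Poisson-commute; since $\dim_{\R}\Rep^{\textnormal{\tiny{DT}}}_\alpha(\Sigma_{0,n},\psl)=2(n-3)$, they form a completely integrable system, and $U$ inherits action-angle coordinates with Lagrangian torus fibres $F_\theta$. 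The elliptic analogue of Wolpert's twist formula then identifies the time-$1$ map of the Hamiltonian flow of $\theta_i$ with the Dehn twist $T_{\gamma_i}$. On each fibre $F_\theta$, the subgroup generated by $T_{\gamma_1},\ldots,T_{\gamma_{n-3}}$ acts by translation by $(\theta_1,\ldots,\theta_{n-3})$, which for Lebesgue-almost every $\theta$ is dense by Weyl equidistribution. Consequently, any $\PMCG(\Sigma_{0,n})$-invariant $L^2$-function $f$ is almost everywhere constant on almost every fibre, and so descends to an $L^2$-function $\bar f$ on the base $B$ of the Lagrangian fibration.

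The second step mixes between fibres. I would pick a simple closed curve $\delta$ that essentially intersects some $\gamma_i$, for instance the curve obtained from $\mathcal{P}$ by an elementary move replacing $\gamma_i$ by $\delta$. Ellipticity again makes $\theta_\delta$ well-defined on a full-measure subset, and its Hamiltonian flow realises the Dehn twist $T_\delta$. Since $\{\theta_\delta,\theta_i\}\neq 0$, this flow is transverse to the Lagrangian fibration determined by $\mathcal{P}$, so $T_\delta$-invariance of $f$ descends to a relation $\bar f(\theta)=\bar f(\sigma_\delta(\theta))$ for a nontrivial map $\sigma_\delta$ of $B$. Exhausting enough elementary moves, and appealing to the fact that $\PMCG(\Sigma_{0,n})$ is generated by Dehn twists along pants curves of finitely many such decompositions, produces a pseudogroup acting on $B$ under which $\bar f$ is invariant; showing that this pseudogroup is ergodic on $B$ with respect to the Duistermaat-Heckman measure then forces $\bar f$ to be constant, proving the theorem.

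The hard part will be this final ergodicity statement on $B$. The fibrewise step is essentially formal once the action-angle description is in place, but proving that the maps $\sigma_\delta$ coming from all elementary moves act ergodically on the base polytope $B$ is a substantive finite-dimensional problem. Here the explicit Kähler/toric structure on $\Rep^{\textnormal{\tiny{DT}}}_\alpha(\Sigma_{0,n},\psl)$ established by Deroin-Tholozan, which realises $B$ as a concrete Delzant-type polytope, should provide the tools needed to compute the $\sigma_\delta$ and verify their joint ergodicity.
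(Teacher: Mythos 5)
There is a genuine gap in your second step. You claim that, once the invariant function $f$ has been shown to be almost everywhere constant on almost every fibre of the moment map $\mu=(\theta_1,\ldots,\theta_{n-3})$ and hence descends to a function $\bar f$ on the base $B$, the Dehn twist $T_\delta$ along a dual curve induces a well-defined map $\sigma_\delta$ of $B$ with $\bar f=\bar f\circ\sigma_\delta$. It does not. By the twist-flow identity, $T_\delta$ is the time-$\theta_\delta(x)/2$ map of the Hamiltonian flow of $\theta_\delta$; since $\{\theta_\delta,\theta_i\}$ is a function on the total space and is not constant along the fibres of $\mu$, two points of the same $\mu$-fibre are generally sent by $T_\delta$ to different $\mu$-fibres. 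One only obtains the relation $\bar f(\mu(T_\delta x))=\bar f(\mu(x))$ for a.e.\ $x$, which is a correspondence on $B$, not a transformation, so the "pseudogroup acting on $B$" whose ergodicity you defer to the final step is not actually defined. A second, related gap: you assert $\{\theta_\delta,\theta_i\}\neq 0$ as if it were automatic from essential intersection, but this bracket can and does vanish at some points (for $n=4$ with equal angles it vanishes on a whole $\Phi_{b_1}$-orbit worth of twist-translates); controlling its zero locus is precisely the technical heart of the matter and cannot be waved through.

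The paper's route avoids the descent to $B$ entirely and stays local. It introduces the open set $\E$ of points where, for each $i$, some twisted dual function $\vartheta_{(\tau_{b_i})^{m_i}d_i}$ has nonvanishing bracket with $\vartheta_{b_i}$; a Key Lemma (proved by explicit $\mathfrak{sl}_2$ trace computations) shows $\{\vartheta_{b_i},\vartheta_{d_i}\}$ vanishes at most at two antipodal points of each regular $\Phi_{b_i}$-orbit, whence $\E$ contains every irrational torus orbit, has full measure, and is connected. At a point of $\E$ the $2(n-3)$ Hamiltonian vector fields of the $\vartheta_{b_i}$ and the $\vartheta_{(\tau_{b_i})^{m_i}d_i}$ span the tangent space, giving a local rectangle fibred by circle orbits on almost all of which the corresponding Dehn twists act by irrational rotation; a Fubini-type "rectangle trick" then makes $f$ locally a.e.\ constant, and connectedness of $\E$ globalizes this. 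If you want to keep your base-descent framework, you would need to replace the nonexistent $\sigma_\delta$ by an honest argument for ergodicity of the correspondence, which is essentially as hard as the original problem; adopting the local spanning-plus-rectangle argument is the efficient fix.
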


We prove Theorem \ref{thm:ergodicity} by applying methods developed in 
\cite{GoXi09} and in 
\cite{MaWo15}. The argument has a strong symplectic geometry flavour. The cornerstone of the proof relates the action of a Dehn twist in $\PMCG(\Sigma_{0,n})$ to a certain Hamiltonian flow on $\Rep^{\textnormal{\tiny{DT}}}_\alpha(\Sigma_{0,n},\psl)$, see Proposition \ref{prop:Dehn_twists_Ham_flow} for a precise statement. A coarse sketch of the proof consists of the following steps.

\begin{enumerate}
\item Identify sufficiently many Dehn twists in $\PMCG(\Sigma_{0,n})$ such that the associated Hamiltonian flows locally act transitively on $\Rep^{\textnormal{\tiny{DT}}}_\alpha(\Sigma_{0,n},\psl)$.
\item Prove that this implies that any integrable $\PMCG(\Sigma_{0,n})$-invariant function $\Rep^{\textnormal{\tiny{DT}}}_\alpha(\Sigma_{0,n},\psl)\to \R$ must be constant almost everywhere.
\end{enumerate}
Theorem \ref{thm:ergodicity} can be refined to a stronger statement. Namely, we also prove

\begin{thmx}\label{thm:ergodicity_refined}
For $n\geq 5$, there exists a proper subgroup $\mathcal H$ of $\PMCG(\Sigma_{0,n})$ whose action on $\Rep^{\textnormal{\tiny{DT}}}_\alpha(\Sigma_{0,n},\psl)$ is ergodic with respect to the Goldman symplectic measure. Moreover, $\mathcal H$ can be chosen to be finitely generated by $2(n-3)$ Dehn twists.
\end{thmx}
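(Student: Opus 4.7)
The plan is to refine the proof of Theorem \ref{thm:ergodicity} by exhibiting $2(n-3)$ specific Dehn twists whose associated Hamiltonian flows already satisfy the local transitivity required in Step (1) of the strategy sketched after Theorem \ref{thm:ergodicity}. Since $\Rep^{\textnormal{\tiny{DT}}}_\alpha(\Sigma_{0,n},\psl)$ is a symplectic manifold of real dimension $2(n-3)$, this count is optimal: fewer Hamiltonian vector fields could not span the tangent space.

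I would first fix two pants decompositions $\mathcal{P}_1=\{\gamma_1,\ldots,\gamma_{n-3}\}$ and $\mathcal{P}_2=\{\delta_1,\ldots,\delta_{n-3}\}$ of $\Sigma_{0,n}$ chosen so that their union fills the surface, i.e.\ every $\delta_j$ has essential intersection with some $\gamma_i$. A convenient choice is to let $\mathcal{P}_1$ be the ``linear chain'' in which $\gamma_i$ separates the punctures $\{p_1,\ldots,p_{i+1}\}$ from the rest, and $\mathcal{P}_2$ a ``shifted'' chain where $\delta_j$ separates $\{p_2,\ldots,p_{j+2}\}$ from the rest. By Proposition \ref{prop:Dehn_twists_Ham_flow}, each Dehn twist $T_{\gamma_i}$ or $T_{\delta_j}$ acts on $\Rep^{\textnormal{\tiny{DT}}}_\alpha$ as a time of the Hamiltonian flow of a trace-type function $f_{\gamma_i}$, respectively $f_{\delta_j}$.

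The key technical step is to show that the $2(n-3)$ Hamiltonian vector fields $X_{f_{\gamma_i}},X_{f_{\delta_j}}$ span the tangent space at almost every point. The curves in $\mathcal{P}_1$ are pairwise disjoint, so by the Goldman bracket the functions $\{f_{\gamma_i}\}$ are in involution and assemble into a completely integrable system whose $n-3$ flows are tangent to, and generically span, the Lagrangian fibres of the action-angle structure of the Deroin-Tholozan component. Because $\mathcal{P}_2$ fills against $\mathcal{P}_1$, the Goldman bracket formula produces generically nonzero Poisson brackets $\{f_{\gamma_i},f_{\delta_j}\}$, so the vector fields $X_{f_{\delta_j}}$ pick up directions transverse to the Lagrangian fibration; a rank argument applied to the Poisson pairing matrix $(\{f_{\gamma_i},f_{\delta_j}\})_{i,j}$ then yields spanning on a Zariski-open locus. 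Granted this local transitivity, the remaining argument copies the proof of Theorem \ref{thm:ergodicity}: any $L^2$ function invariant under $\mathcal{H}:=\langle T_{\gamma_i},T_{\delta_j}\rangle$ is in particular invariant under each $T_{\gamma_i}$ and $T_{\delta_j}$, hence, by the recurrence argument used one generator at a time in Step (2) of the original strategy, invariant along each Hamiltonian flow of $f_{\gamma_i},f_{\delta_j}$; spanning forces such a function to be locally, and therefore globally, constant almost everywhere.

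For properness of $\mathcal{H}$ when $n\geq 5$, I would appeal to the fact that the abelianization of $\PMCG(\Sigma_{0,n})$ has rank strictly greater than $2(n-3)$ (it is controlled by the abelianization of a pure braid group on $n-1$ strands modulo its center, of rank on the order of $\binom{n-1}{2}$), so $2(n-3)$ Dehn twists cannot surject onto it; alternatively, one can exhibit an essential separating curve $\eta$ disjoint from every $\gamma_i$ and $\delta_j$ (possible since $n\geq 5$ leaves room in the curve complex) and use a homomorphism to a quotient where $T_\eta$ is independent from the chosen twists. The main obstacle is the spanning claim in the third paragraph: the action-angle structure gives the Lagrangian half of the spanning for free, but showing that the transverse twists $T_{\delta_j}$ really do provide genuinely independent complementary directions requires careful use of the Goldman bracket and control over the loci where the action coordinates degenerate inside the compact Deroin-Tholozan component.
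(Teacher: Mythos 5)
Your overall strategy is the same as the paper's: take the twists along a ``linear chain'' of curves (the paper's $b_1,\ldots,b_{n-3}$, whose angle functions give the toric structure of \cite{DeTh16}) together with $n-3$ transverse curves, use Proposition \ref{prop:Dehn_twists_Ham_flow} to convert twists into Hamiltonian flows, show the $2(n-3)$ flows span the tangent space on a good set, and conclude properness of $\mathcal H$ for $n\geq 5$ by a generator count; your abelianization bound is essentially the content of the reference the paper invokes (\cite[Lem.\ 4.1]{GhWi17}, giving $\binom{n-1}{2}-1>2(n-3)$). However, the step you yourself flag as ``the main obstacle'' is a genuine gap, and it is precisely where all the work in the paper lies. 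Asserting that the brackets $\{f_{\gamma_i},f_{\delta_j}\}$ are ``generically nonzero'' is not enough: to run Lemma \ref{lem:ergodic_2} one needs the non-degeneracy locus to be open, of full measure \emph{and connected}, and none of this follows from a soft ``Zariski-open locus'' argument (the functions $\vartheta_a$ are not algebraic, and Remark \ref{rem:symmetric_case} exhibits points of $\mu^{-1}(\ring\Delta)$ where the bracket with $\vartheta_{d_i}$ does vanish). The paper resolves this with the Key Lemma \ref{lem:key_lemma} --- an explicit $2\times 2$ matrix computation showing that on each regular $\Phi_{b_i}$-orbit the bracket $\{\vartheta_{b_i},\vartheta_{d_i}\}$ vanishes at no more than two (diametrically opposite) points --- combined with the trick of replacing $d_i$ by $(\tau_{b_i})^{m_i}d_i$ (Lemma \ref{lem:poisson_bracket_dehn_twist}), which costs nothing group-theoretically since $\tau_{(\tau_{b_i})^{m_i}d_i}=(\tau_{b_i})^{m_i}\tau_{d_i}(\tau_{b_i})^{-m_i}\in\mathcal H$. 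Without an argument of this kind your proof is incomplete at its central point.

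Two smaller remarks. First, your fallback argument for properness cannot work: since the union of your two pants decompositions fills $\Sigma_{0,n}$, its complement consists of disks and once-punctured disks, so there is no essential curve $\eta$ disjoint from all of the $\gamma_i$ and $\delta_j$; stick with the abelianization count. Second, even granting spanning at a point, passing from ``invariant under each twist'' to ``locally constant a.e.'' is not a bare recurrence argument: the paper needs ergodicity of irrational rotations on the circle orbits together with the Rectangle Trick (Lemma \ref{lem:rectangle_trick}) to upgrade constancy on almost every flow line to constancy on a neighbourhood, and this in turn uses that the relevant orbits are irrational --- another place where the structure of the set $\E$ matters.
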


A complete parametrization of $\Rep^{\textnormal{\tiny{DT}}}_\alpha(\Sigma_{0,n},\psl)$ will be detailed in a companion paper. It consists of action-angle coordinates which are Darboux coordinates for the Goldman symplectic form on an open and dense subspace of $\Rep^{\textnormal{\tiny{DT}}}_\alpha(\Sigma_{0,n},\psl)$. Among the $2(n-3)$ Hamiltonian flows relevant for the proof of Theorem \ref{thm:ergodicity_refined}, half can be chosen to be the flows of the action coordinates of $\Rep^{\textnormal{\tiny{DT}}}_\alpha(\Sigma_{0,n},\psl)$.

\subsection{A brief note on character varieties}\label{sec:character_varieties}
Let $\surface$ denote a connected oriented topological surface of genus $g\geq 0$ and with $n\geq 0$ labelled punctures. Let $G$ denote a connected Lie group. The \textit{character variety} associated to the pair $(\surface,G)$ is the Hausdorffization of the topological quotient of the space of group homomorphisms $\pi_1(\surface)\to G$ by the conjugacy action of $G$:
\[
\Rep(\surface,G):=\Hom(\pi_1(\surface),G)/G.
\]
Group homomorphisms $\pi_1(\surface)\to G$ are also called \textit{representations} of (the fundamental group of) $\surface$ into $G$.

If $n>0$, the character variety is typically partitioned into \textit{relative character varieties}. Consider all the $n$-tuples $C=(C_1,\ldots,C_n)$ of conjugacy classes in $G$. We denote by $\Rep_C(\surface,G)$ the subset of $\Rep(\surface,G)$ consisting of conjugacy classes of representations that map a designated positively oriented loop enclosing the $i$th puncture of $\surface$ to an element inside $C_i$, for every $i=1,\ldots,n$.

To uniformize notation, we denote the character variety $\Rep(\Sigma_{g,0},G)$ of the closed surface $\Sigma_{g,0}$ as a relative character variety by $\Rep_C(\Sigma_{g,0},G)$ where $C=\emptyset$. For a detailed introduction to (relative) character varieties and their topology we refer the reader to
\cite{Mon17}.

The \textit{mapping class group} of $\surface$ is the group of isotopy classes of orientation-preserving homeomorphisms $\surface\to \surface$ that fix each puncture (if any) individually. It is denoted\footnote{In the terminology of 
\cite{FaMa12}, it is called the \textit{pure mapping class group} of $\surface$ and is denoted by PMod($\surface$). It contrasts with the mapping class group of $\surface$ where homeomorphisms are allowed to permute punctures.}  by $\PMCG(\surface)$. Standard facts about $\PMCG(\surface)$ include the following, see e.g.\ 
\cite[\S 4,\S 8]{FaMa12} for details.
\begin{itemize}
\item The mapping class group is finitely presented. Generators can be chosen to be Dehn twists along simple closed curves on $\surface$. 
\item The Dehn-Nielsen Theorem identifies $\PMCG(\surface)$ with a subgroup of the group of outer automorphisms of $\pi_1(\surface)$. 
\end{itemize}
The latter stresses a natural $\PMCG(\surface)$-action on $\Rep(\surface,G)$ by precomposition. Any homeomorphism considered in $\PMCG(\Sigma_{g,n})$ fixes each puncture individually, by definition. Thus, the $\PMCG(\Sigma_{g,n})$-action preserves every relative character variety $\Rep_C(\Sigma_{g,n},G)\subset \Rep(\Sigma_{g,n},G)$.

If the Lie algebra of $G$ admits a non-degenerate symmetric $\Ad$-invariant bilinear form $B$, then the smooth locus of $\Rep_C(\surface,G)$ enjoys a natural symplectic structure 
\cite{Gol86}, \cite{GHJW97}. It is denoted by $\omega_{\mathcal G}$ and referred to as the \textit{Goldman symplectic form}. Despite the notation, the symplectic form $\omega_{\mathcal G}$ depends on the bilinear form $B$. It easily follows from the definition of $\omega_{\mathcal G}$ that $\PMCG(\surface)$ acts by symplectomorphisms on $(\Rep_C(\surface,G),\omega_{\mathcal G})$, see e.g.\ 
\cite{Gol06}. So, if $\nu_{\mathcal G}$ denotes the symplectic measure on $\Rep_C(\surface,G)$ associated to $\omega_{\mathcal G}$, then the $\PMCG(\surface)$-action preserves $\nu_{\mathcal G}$:
\begin{equation}\label{eq:mcg_action}
\PMCG(\surface)\acts (\Rep_C(\surface,G),\nu_{\mathcal G}).
\end{equation}
As a matter of fact, if $G$ is compact, then $\Rep_C(\surface,G)$ has finite symplectic volume
\cite[Thm. 7.2]{Hu95}.

\subsection{Historical remarks}\label{sec:history} 

Theorem \ref{thm:ergodicity} is the contribution of the author to a series of results about the mapping class group dynamics on character varieties. We briefly provide the reader with an overview of this field which has been studied extensively in the past decades. The list below is certainly non-exhaustive and reflects the taste of the author. We write $\Sigma_{g,n}$ for a connected and oriented surface of genus $g$ with $n$ punctures.

Goldman proved in 
\cite{Gol97} that the mapping class group action is ergodic whenever $\surface$ has negative Euler characteristic and $G$ is a Lie group whose simple factors are isomorphic to $\text{SU}(2)$.  In 
\cite{GoXi09} Goldman-Xia provided a new proof of the ergodicity for $\text{SU}(2)$-character varieties relying on the symplectic geometry of the character variety. Goldman conjectured in 
\cite[Conj. 1.3]{Gol97} that the mapping class group action is ergodic for any compact Lie group. The conjecture was proven by Pickrell-Xia in 
\cite{PiXi02, PiXi03} for all $\Sigma_{g,n}$ with negative Euler characteristic except $\Sigma_{1,1}$. Goldman-Lawton-Xia established ergodicity for $\Sigma_{1,1}$ and $G=\text{SU}(3)$ in 
\cite{GLX21}.

If $G$ is not compact, the dynamics of the mapping class group action exhibit a different behaviour. It is, for instance, long known that the mapping class group acts properly and discontinuously on Teichm\"uller space which can be realized as a connected component of $\Rep(\Sigma_{g,0},\psl)$. More generally, the action is proper on maximal and Hitchin representations \cite{Wie06}, \cite{Lab08}. Ergodic actions contrast with proper actions by producing chaotic dynamics. Goldman promotes the following dichotomy in 
\cite{Gol06}. Assume that $G$ is noncompact and semisimple. The action is expected to be "nice" on connected components of the character variety that have a "strong" geometrical meaning (such as Teichm\"uller space). On the other hand, it is expected to give rise to more "complicated" dynamics on the remaining components. He conjectured, for instance, that the action is ergodic on the non-Teichm\"uller components of $\Rep(\Sigma_{g,0},\psl)$
\cite[Conj.\ 3.1]{Gol06}. March\'e-Wolff proved in 
\cite{MaWo16, MaWo15} that the conjecture holds for $\Sigma_{2,0}$ on the connected components of Euler class $\pm 1$ and disproved the conjecture for the component of Euler class zero. They also introduce the subspace $\mathcal{NH}^k_g$ of $\Rep(\Sigma_{g,0},\psl)$ that consists of representations with Euler class $k$ which map a simple closed curve to a non-hyperbolic element of $\psl$ and prove that the action is ergodic on $\mathcal{NH}^k_g$ for $(g,k)\neq (2,0)$, see \cite[Theo.\ 1.6]{MaWo16}. This shows that Goldman's conjecture is equivalent to $\mathcal{NH}^k_g$ having full measure in the corresponding connected component.

The counterpart of Goldman's conjecture for non-closed surfaces was formulated recently by Yang. He investigated in 
\cite{Yang16} the mapping class group action on $\Rep_C(\Sigma_{g,n},\psl)$, where $C$ is any collection of parabolic conjugacy classes. In the case of a 4-punctured sphere, he proved that the action is ergodic on every connected component of non-extremal Euler class, generalizing a result known to Maloni-Palesi-Tan for the components of Euler class $\pm 1$ 
\cite{MPT15}. He further conjectured that the analogue statement holds for every punctured surfaces 
\cite[Conj. 1.4]{Yang16}.

Several authors have also considered the action of remarkable subgroups of $\PMCG(\surface)$ on character varieties. For instance, the \textit{Johnson group} is the subgroup of $\PMCG(\surface)$ generated by Dehn twists along simple closed curves which are null-homologous in $H_1(\surface;\Z)$. Goldman-Xia proved in 
\cite{GoXi11} that the action of the Johnson group on $\Rep_C(\Sigma_{1,2},\text{SU}(2))$ is ergodic for generic $C$. This result was extended to all closed surfaces $\Sigma_{g,0}$ with $g\geq 2$ by Funar-March\'e in 
\cite{FuMa13}. Another remarkable subgroup of $\PMCG(\surface)$ is the \textit{Torelli group}. If $n\leq 1$, then the Torelli group is the subgroup of $\PMCG(\surface)$ acting trivially on $H_1(\surface;\Z)$. The Johnson group is a subgroup of the Torelli group, see e.g.\ 
\cite[\S 6]{FaMa12} for more details.  Bouilly recently proved in 
\cite{Bo20} that the action of the Torelli group on each connected component of $\Rep(\Sigma_{g,0},G)$ is ergodic for any $g\geq 2$ and for any compact connected semisimple Lie group $G$.

The mapping class group action remains of interest on character varieties on which the Goldman symplectic form cannot be defined, for there are ways to define an alternative natural invariant measure, see e.g.\
\cite{Pal11} and references therein. The first kind of examples are character varieties of non-orientable surfaces.  Palesi proved in 
\cite{Pal11} that the mapping class group action is ergodic for every non-orientable surfaces with Euler characteristic at most $-2$, including punctured surfaces, and $G=\text{SU}(2)$. Maloni-Palesi-Yang studied in 
\cite{MPY18} the mapping class group action on certain representations of the 3-punctured projective plane into $\text{PGL}(2,\R)$ that map peripheral loops to parabolic isometries. They proved that the action is ergodic on most of the connected components of non-maximal Euler characteristic. They expect ergodicity to hold on the remaining components as well.

The existence of an invariant symplectic structure may also fail for certain Lie groups. An example is the group $\text{Aff}(\C)$ of affine transformations of the complex plane. Ghazouani showed in 
\cite{Gha16} that the mapping class group action on $\Rep(\Sigma_{g,0},\text{Aff}(\C))$ does not preserve any symplectic form. There exists however an invariant measure for which the mapping class group is ergodic 
\cite{Gha16}.

\subsection{Organization of the paper} Section \ref{sec:preliminaries} provides an introduction to Deroin-Tholozan representations, recalling the notion of volume of a representation and the main results of 
\cite{DeTh16}. Further in Section \ref{sec:preliminaries} we introduce the mapping class group action in details and explain how it connects to the symplectic geometry of the character variety, before ending with a short introduction to ergodic actions.

We explain in Section \ref{sec:skeleton} how the proof of Theorem \ref{thm:ergodicity} reduces to two technical lemmata that we state in Subsection \ref{sec:two_lemmas}. Their proofs are postponed to Sections \ref{sec:rectangle_trick} and \ref{sec:key_lemma}. In Remark \ref{rem:proof_second_thm}, at the end of Section \ref{sec:skeleton}, we explain how the proof of Theorem \ref{thm:ergodicity} also implies the stronger statement of Theorem \ref{thm:ergodicity_refined}.

\subsection{Acknowledgements}
I am deeply grateful to my doctoral advisers Peter Albers and Anna Wienhard whose office doors were always open for inspirational discussions. I would like to thank Nicolas Tholozan for suggesting the project. A special thank goes to my friend Andy Sanders who always showed a great moral support in the dark hours of doctoral research.

I would like to address my gratitude to Nguyen-Thi Dang and to my old friend Quentin Posva for fruitful conversations that lead to the proof of Lemma \ref{lem:rectangle_trick}. I also thank Yohann Bouilly and Julien March\'e for pointing out relevant articles and for sharing unpublished work.

This work is supported by Deutsche Forschungsgemeinschaft (DFG, German
Research Foundation) through Germany’s Excellence Strategy EXC-2181/1 - 390900948 (the Heidelberg STRUCTURES Excellence Cluster), the Transregional Collaborative Research Center CRC/TRR 191 (281071066) and the Research Training Group RTG 2229 (281869850).

\section{Preliminaries}\label{sec:preliminaries}

\subsection{Deroin-Tholozan representations}

Deroin-Tholozan representations of the fundamental group of a punctured sphere into $\psl$ were introduced in 
\cite{DeTh16}. These representations had already been studied in the case of a 4-punctured sphere by Benedetto-Goldman 
\cite{BeGo99}. The illustrations in 
\cite{BeGo99} of the various topological types of relative character varieties of representations of a 4-punctured sphere into $\psl$ are particularly enlightening. 

Recall that $\psl$ is the Lie group defined as the quotient of the group SL$(2,\R)$ of $2\times 2$ real matrices with determinant one by its center $\{\pm I\}$. Deroin-Tholozan representations form compact connected components of certain relative character varieties where simple loops are mapped to elliptic elements of $\psl$. They are holonomies of hyperbolic metrics on a punctured sphere with prescribed conical singularities. We refer the reader to 
\cite{DeTh16} for more details about the geometrization of Deroin-Tholozan representations. In this section we recall the definition and some key properties of Deroin-Tholozan representations.

Let $\Hyp$ denote the upper half-plane with its standard hyperbolic metric. Recall that $\psl$ can be identified via M\"obius transformations with the group of orientation-preserving isometries of $\Hyp$. Elliptic elements of $\psl$ are those that have a unique fixed point inside $\Hyp$. The subspace of elliptic elements in $\psl$ is diffeomorphic to $\Hyp\times (0,2\pi)$. The diffeomorphism identifies an elliptic element $A\in \psl$ with the pair consisting of its unique fixed point $\fix (A)\in \Hyp$ and the unique angle $\vartheta=\vartheta(A)\in (0,2\pi)$ such that $A$ is conjugate to
\[
\rot_{\vartheta} := 
 \pm\begin{pmatrix}
  \cos(\vartheta/2) & \sin(\vartheta/2)  \\
  -\sin(\vartheta/2) & \cos(\vartheta/2)  
 \end{pmatrix}\in \psl.
\]
The angle $\vartheta(A)$ is called the \textit{rotation angle} of $A$. The assignment $A\mapsto \vartheta(A)$ is a smooth function of the subspace of elliptic elements of $\psl$. One can extend the function $\vartheta$ to an upper semi-continuous function $\overline\vartheta\colon \psl\to [0,2\pi]$ by setting
\[
\overline\vartheta(A):=\left\{\begin{array}{ll}
\vartheta(A), &\text{ if $A$ is elliptic,}\\
0, &\text{ if $A$ is hyperbolic or positively parabolic,}\\
2\pi, &\text{ if $A$ is the identity or negatively parabolic.}
\end{array}\right.
\]
The notions of positively and negatively parabolic elements in $\psl$ are not relevant in the context of Deroin-Tholozan representations.

We abbreviate $\Sigma_n:=\Sigma_{0,n}$ the $n$-punctured sphere. The number $n$ of punctures is always assumed to be at least 3. We fix generators $\{c_1,\ldots,c_n\}$ of $\pi_1(\Sigma_n)$ such that
\[
\pi_1(\Sigma_n)=\langle c_1,\ldots,c_n\,\vert\, c_1\cdot\ldots\cdot c_n=1\rangle.
\]
Each $c_i$ is the homotopy class of a positively oriented simple closed curve that encloses the $i$th puncture of $\Sigma_n$. If $\alpha=(\alpha_1,\ldots,\alpha_n)\in (0,2\pi)^n$ denotes an $n$-tuple of angles, then we write $\Rep_\alpha(\Sigma_{n},\psl)$ for the relative character variety that consists of conjugacy classes $[\phi]$ of homomorphisms $\phi\colon \pi_1(\Sigma_n)\to\psl$ such that $\phi(c_i)$ is conjugate to $\rot_{\alpha_i}$ for every $i=1,\ldots,n$.

A powerful tool to study the topology of relative character varieties is the notion of \textit{volume of a representation} (or Toledo number) introduced by Burger-Iozzi-Wienhard in 
\cite{BIW10}. They proved  

\begin{thm}[\cite{BIW10}]\label{thm:volume}
There exists a continuous and bounded function
\[
\vol\colon \Rep(\Sigma_{n},\psl)\to \R
\]
that satisfies the following properties:
\begin{enumerate}
\item $\vol$ is locally constant on each relative character varieties,
\item $\vol$ is additive, i.e.\ if $\Sigma_n=S_1\sqcup_\gamma S_2$ is the disjoint union of two surfaces $S_1,S_2$ glued along a separating curve $\gamma$, then
\[
\vol([\phi])=\vol([\phi\restriction_{\pi_1(S_1)}])+\vol([\phi\restriction_{\pi_1(S_2)}]),
\]
\item for every $[\phi]\in \Rep(\Sigma_{n},\psl)$, there exists an integer $k([\phi])$ such that
\[
\vol([\phi])=2\pi k([\phi])-\sum_{i=1}^n\overline\vartheta(\phi(c_i)).
\]
\end{enumerate}  
\end{thm}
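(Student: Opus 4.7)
The plan is to follow the bounded-cohomology construction of Burger-Iozzi-Wienhard. The hyperbolic area form on $\Hyp$ represents a bounded continuous class $\kappa_b\in H^2_{cb}(\psl;\R)$, the bounded K\"ahler class. For a representation $\phi\colon \pi_1(\Sigma_n)\to \psl$ its pullback $\phi^*\kappa_b\in H^2_b(\pi_1(\Sigma_n);\R)$ pairs with the relative fundamental class $[\Sigma_n,\partial \Sigma_n]\in H_2(\Sigma_n,\partial\Sigma_n;\R)$ via the Gromov duality between relative bounded cohomology and $\ell^1$-homology. The resulting real number is by definition $\vol([\phi])$. Continuity of $\vol$ then follows from continuity of the pullback in the sup-norm on bounded cochains, since the pairing with a fixed finite relative class is a bounded linear functional on $H^2_b$.

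For property (2), a separating simple closed curve $\gamma$ decomposes the relative fundamental class as $[\Sigma_n,\partial\Sigma_n]=[S_1,\partial S_1]+[S_2,\partial S_2]$ once $\gamma$ is adjoined to the boundaries of the pieces, and the bounded pairing is linear, from which additivity is immediate.

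The core calculation is property (3). Denote by $G$ the universal cover of $\psl$, whose center is isomorphic to $\Z$, and let $\tau\colon G\to \R$ be the translation/rotation quasimorphism whose coboundary represents the bounded Euler class on $\psl$. I would choose a continuous lift $\widetilde\phi\colon F_{n-1}\to G$ of $\phi$ restricted to the free subgroup generated by $c_1,\ldots,c_{n-1}$, and set $\widetilde\phi(c_n):=(\widetilde\phi(c_1)\cdots \widetilde\phi(c_{n-1}))^{-1}$. Then $\widetilde\phi(c_1)\cdots\widetilde\phi(c_n)$ lies in the central kernel $\Z\subset G$ and defines the integer $k([\phi])$. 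Expressing $\phi^*\kappa_b$ as the coboundary of $\tau\circ \widetilde\phi$ on free generators and evaluating against a relative fundamental cycle produces a telescoping sum whose boundary contributions are precisely the $\tau(\widetilde\phi(c_i))$. Using that $\tau$ agrees with $\overline\vartheta$ modulo $2\pi\Z$ on elliptic elements (and extends consistently to the limits encoded by $\overline\vartheta$), one obtains $\vol([\phi])=2\pi k([\phi])-\sum_{i=1}^n \overline\vartheta(\phi(c_i))$.

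Property (1) then follows as a corollary: on $\Rep_\alpha(\Sigma_n,\psl)$ the sum $\sum\overline\vartheta(\phi(c_i))=\sum\alpha_i$ is constant, $\vol$ is continuous, and $k$ is $\Z$-valued, forcing both to be locally constant. The main technical obstacle I expect is the reconciliation in (3) at representations where some $\phi(c_i)$ is hyperbolic, parabolic, or trivial: the continuous lift $\tau(\widetilde\phi(c_i))$ can differ from $\overline\vartheta(\phi(c_i))$ by a jump of $\pm 2\pi$ across such strata, and managing these jumps coherently is precisely the reason $\overline\vartheta$ must be defined as an upper semi-continuous extension rather than a genuinely continuous function on $\psl$.
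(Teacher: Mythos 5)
First, note that the paper does not prove this statement: it is quoted from Burger--Iozzi--Wienhard \cite{BIW10} and used as a black box, so there is no internal proof to compare against. Judged as a reconstruction of the argument in \cite{BIW10}, your outline has the right architecture: the Toledo invariant is defined by pairing $\phi^*\kappa_b$ with the relative fundamental class, additivity does follow from the decomposition of that class along a separating curve, the formula in (3) is obtained from the translation quasimorphism on the universal cover $G$ of $\psl$, and you correctly identify both the deduction of (1) from (3) plus continuity and the role of the upper semi-continuous extension $\overline\vartheta$.

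Two points in your sketch would not survive being written out. First, continuity of $\vol$ does not follow from ``continuity of the pullback in the sup-norm'': the assignment $\phi\mapsto\phi^*\kappa_b$ is not continuous into bounded cohomology with the quotient sup-norm in any way that turns your sentence into an argument. In \cite{BIW10} continuity is a \emph{consequence} of the rotation-number formula of item (3), using that the translation number on $G$ is continuous and that lifts of the $\phi(c_i)$ can be chosen continuously in a neighbourhood of any given representation; so your logical order must be reversed, with (3) established first, then continuity, then (1). Second, your normalization $\widetilde\phi(c_n):=(\widetilde\phi(c_1)\cdots\widetilde\phi(c_{n-1}))^{-1}$ forces the product $\widetilde\phi(c_1)\cdots\widetilde\phi(c_n)$ to be exactly the identity, so the integer you propose to read off from the central kernel is identically $0$ and cannot be $k([\phi])$. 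The integer has to arise from comparing lifts with the specific branch of the translation number that restricts to $\overline\vartheta$ on each conjugacy class: choose for each $i$ the lift of $\phi(c_i)$ singled out by $\overline\vartheta(\phi(c_i))$, and then the product of these $n$ preferred lifts is a central element $z^{k([\phi])}$, which defines $k([\phi])$. With that correction the telescoping evaluation you describe does yield (3), and the $\pm 2\pi$ jumps you flag at hyperbolic, parabolic and trivial boundary holonomies are absorbed into jumps of $k$.
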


Deroin-Tholozan proved in 
\cite{DeTh16} that 
\begin{equation}\label{eq:inequality_euler_characteristic}
k([\phi])\leq \max\left\{n-2,\frac{1}{2\pi}\sum_{i=1}^n\overline\vartheta(\phi(c_i))\right\}.
\end{equation}

\begin{rem}
The integer $k([\phi])$ is called the \textit{relative Euler class} of $[\phi]$. It is a generalization of the notion of Euler class associated to representations of closed surfaces. In that respect, the inequality \eqref{eq:inequality_euler_characteristic} can be thought of as a generalization of the celebrated Milnor-Wood inequality.
\end{rem}

For $[\phi]\in \Rep_\alpha(\Sigma_{n},\psl)$, the last property of Theorem \ref{thm:volume} reads
\begin{equation}\label{eq:volume_euler_class}
\vol([\phi])=2\pi k([\phi])-\sum_{i=1}^n\alpha_i.
\end{equation}
Since $k([\phi])$ is an integer and $\alpha_i\in(0,2\pi)$ for all $i$, the inequality \eqref{eq:inequality_euler_characteristic} implies $k([\phi])\leq n-1$ for every $[\phi]\in\Rep_\alpha(\Sigma_{n},\psl)$. Moreover, $k([\phi])=n-1$ is possible only if 
\begin{equation}\label{eq:angle_cond}
\alpha_1+\ldots+\alpha_n>2\pi(n-1). 
\end{equation}

\begin{dfn}
An $n$-tuple $\alpha=(\alpha_1,\ldots,\alpha_n)\in (0,2\pi)^n$ that fulfils \eqref{eq:angle_cond} is said to satisfy the \textit{angles condition}. Let
\[
\lambda:= \alpha_1+\ldots+\alpha_n-2\pi(n-1).
\]
The number $\lambda\in(0,2\pi)$ is called the \textit{scaling parameter}. 
\end{dfn}

Because of \eqref{eq:volume_euler_class}, the inequality $k([\phi])\leq n-1$ is equivalent to $\vol([\phi])\leq -\lambda$. Similarly as above, $\vol([\phi])= -\lambda$ is possible only if $[\phi]\in \Rep_\alpha(\Sigma_{n},\psl)$ where $\alpha$ satisfies the angles condition.

\begin{conv}
Unless otherwise stated, any $n$-tuple $\alpha=(\alpha_1,\ldots,\alpha_n)\in (0,2\pi)^n$ below is assumed to satisfy the angles condition \eqref{eq:angle_cond}.
\end{conv}

\begin{dfn}
The \textit{character variety of Deroin-Tholozan representations} is defined to be the subspace of $\Rep_\alpha(\Sigma_{n},\psl)$ that maximizes the volume:
\[
\smreplong:=\Rep_\alpha(\Sigma_{n},\psl)\cap \vol^{-1}(\{-\lambda\}).
\]
We abbreviate $\smreplong$ by $\smrep$.
\end{dfn}

\begin{rem}\label{rem:deroin-tholozan}
Deroin and Tholozan originally called these representations \emph{supra-maximal} because their relative Euler class exceeds $-\chi(\Sigma_n)=n-2$. However, these representations do not have maximal volume and are thus not \emph{maximal} in the sense of 
Burger-Iozzi-Wienhard. They even tend to minimize the volume in absolute value. Indeed, by definition, the volume of a Deroin-Tholozan representation is $-\lambda\in(-2\pi,0)$.
The range of the volume over the whole character variety is $[-2\pi(n-2),2\pi(n-2)]$, see \cite{BIW10}. To avoid any further confusion we prefer the terminology of Deroin-Tholozan representations instead of that of supra-maximal representations.
\end{rem}

\begin{thm}[\cite{DeTh16}]\label{thm:DeTh}
The character variety of Deroin-Tholozan representations $\smrep$ is a nonempty compact connected component of the relative character variety $\Rep_\alpha(\Sigma_{n},\psl)$. It is moreover a smooth symplectic manifold of dimension $2(n-3)$.
\end{thm}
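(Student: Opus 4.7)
The plan is to verify the four properties---nonemptiness, compactness, connectedness, and the smooth symplectic structure of dimension $2(n-3)$---by exploiting the three properties of the volume function recorded in Theorem \ref{thm:volume}.

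First, since $\vol$ is continuous and locally constant on $\Rep_\alpha(\Sigma_n,\psl)$, the preimage $\vol^{-1}(\{-\lambda\})$ is clopen in $\Rep_\alpha(\Sigma_n,\psl)$. Hence $\smrep$ is automatically a union of connected components, so it suffices to prove that it is nonempty and connected. For nonemptiness I would argue by induction on $n$: the base case $n=3$ reduces to the classical trigonometric problem of realizing three elliptic elements of $\psl$ with prescribed angles whose product is the identity, which is solvable precisely under the angles condition \eqref{eq:angle_cond}. For $n>3$, one decomposes $\Sigma_n=\Sigma_{n-1}\sqcup_\gamma\Sigma_3$ along a separating simple closed curve $\gamma$, chooses a compatible elliptic conjugacy class for $\phi(\gamma)$, and glues using additivity of the volume (Theorem \ref{thm:volume}(2)).

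The heart of the matter---and the main obstacle---is compactness. The key observation is that combining additivity of the volume with the sharp inequality \eqref{eq:inequality_euler_characteristic} applied to each piece of a separating decomposition $\Sigma_n=S_1\sqcup_\gamma S_2$ forces $\phi(\gamma)$, for every separating simple closed curve $\gamma$ and every $[\phi]\in\smrep$, to be elliptic of rotation angle uniquely determined by $\alpha$ and the topological type of $S_1$. Applied to the standard chain of separating curves that cuts $\Sigma_n$ into pairs of pants, this shows every sub-product $\phi(c_1\cdots c_j)$ is elliptic of prescribed angle; hence it is determined, after conjugating $\phi$, by its fixed point in $\Hyp$. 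The compatibility constraints between successive fixed points carve out a subset of a product of copies of $\Hyp$ which one checks to be compact and connected, yielding at once both properties for $\smrep$.

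Finally, no Deroin-Tholozan representation can fix a point of $\Hyp$---such a fixed point would conjugate the whole image into $\mathrm{SO}(2)/\{\pm I\}$ and force $\alpha_1+\cdots+\alpha_n\in 2\pi\Z$, contradicting the strict angles condition. Consequently each $[\phi]\in\smrep$ is irreducible with trivial centralizer, so $\Rep_\alpha(\Sigma_n,\psl)$ is smooth at $[\phi]$, with tangent space the parabolic cohomology $H^1_{\mathrm{par}}(\pi_1(\Sigma_n),\mathfrak{sl}_2)$. The standard count gives $(2g-2)\dim\psl+\sum_i\dim C_i=-6+2n=2(n-3)$ for the dimension, and the Goldman symplectic form of Section \ref{sec:character_varieties} restricts to a symplectic form on $\smrep$.
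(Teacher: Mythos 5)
First, a caveat on the comparison: Theorem \ref{thm:DeTh} is imported from \cite{DeTh16} and this paper does not reprove it. The closest thing to ``the paper's proof'' is the discussion following the statement, which recalls that Deroin--Tholozan exhibit a Hamiltonian torus action generated by the angle functions $\vartheta_{b_1},\ldots,\vartheta_{b_{n-3}}$, compute its moment polytope to be $\lambda$ times the standard $(n-3)$-simplex, and invoke Delzant's classification to obtain the equivariant symplectomorphism with $\CP^{n-3}$; compactness, connectedness, smoothness and the dimension all come out of that identification at once. Your route is genuinely different (direct topological bookkeeping plus a fixed-point parametrization), and parts of it are fine: the clopen-ness of $\vol^{-1}(\{-\lambda\})$, the gluing construction for nonemptiness, and the dimension count via $H^1_{par}$ are all sound.

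However, the compactness step --- which you rightly identify as the heart of the matter --- rests on a false claim. The rotation angle of $\phi(\gamma)$ for a separating curve $\gamma$ is \emph{not} uniquely determined by $\alpha$ and the topological type of the decomposition. The volume bookkeeping (exactly as in the proof of Proposition \ref{prop:everything_is_elliptic}) yields only $k([\phi_i])=m_i$, i.e.\ both restrictions are again Deroin--Tholozan; this confines $\vartheta(\phi_1(\gamma))$ to the open interval $\bigl(2\pi m_1-\sum_{j\in J_1}\alpha_j,\ 2\pi-2\pi m_2+\sum_{j\in J_2}\alpha_j\bigr)$, not to a single value. Indeed, the entire paper depends on $(\vartheta_{b_1},\ldots,\vartheta_{b_{n-3}})$ being a moment map onto a \emph{full-dimensional} polytope, so these angles sweep out an $(n-3)$-parameter family. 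What is true, and what compactness actually rests on, is that since each $\alpha_j<2\pi$ the interval above is compactly contained in $(0,2\pi)$, so the angles along your chain of curves are uniformly bounded away from $0$ and $2\pi$ on $\smrep$; hyperbolic trigonometry then bounds the distances between the successive fixed points after normalization. Your phrase ``which one checks to be compact and connected'' conceals precisely this argument, and connectedness does not follow at all obviously from the fixed-point picture (in \cite{DeTh16} it is a consequence of the toric structure). Finally, for smoothness you rule out only a common fixed point in $\Hyp$; you should also dispose of the other elementary cases (a common fixed point at infinity is impossible because the $\phi(c_i)$ are elliptic, and a common invariant geodesic would force every $\alpha_i=\pi$, which violates the angles condition for $n\geq 3$).
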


The Goldman symplectic structure $\omega_{\mathcal G}$ on $\smrep$ is the one associated to the bilinear form 
\begin{align*}
\trace\colon &\mathfrak{sl}_2\R\times \mathfrak{sl}_2\R\longrightarrow\R\nonumber \\
&(A, B)\mapsto \trace(AB).
\end{align*}
Here, we identified the Lie algebra of $\psl$ with the Lie algebra $\mathfrak{sl}_2\R$ of $2\times 2$ traceless real matrices. 

The proof of Theorem \ref{thm:DeTh} by Deroin-Tholozan is built around a beautiful application of Delzant's classification of symplectic toric manifolds. A \textit{symplectic toric manifold} is a symplectic manifold equipped with a maximal effective Hamiltonian torus action, see e.g.\ 
\cite[\S XI]{Can06} for more details. Deroin-Tholozan first constructed such an action on $(\smrep,\omega_{\mathcal G})$. They observed then that the associated moment polytope is a rescaling by $\lambda$ of the standard $(n-3)$-simplex. The standard $(n-3)$-simplex is known to be the moment polytope for the standard torus action on $\CP^{n-3}$ equipped with the Fubini-Study symplectic form $\omega_{\mathcal{FS}}$ of total volume $\pi^{n-3}/(n-3)!$.  Delzant's classification thus implies the existence of an equivariant symplectomorphism 
\begin{equation}\label{eq:symplectomorphism}
\big(\smrep,1/\lambda\cdot \omega_{\mathcal G}\big)\cong \big(\CP^{n-3},\omega_{\mathcal{FS}}\big).
\end{equation}
The isomorphism \eqref{eq:symplectomorphism} implies that $\smrep$ has finite symplectic volume. Let $\nu_{\mathcal G}$ denote the multiple of the symplectic measure on $\smrep$ associated to $\omega_{\mathcal G}$ such that $\nu_{\mathcal G}(\smrep)=1$. It follows from the definition of the volume of a representation in
\cite{BIW10} that $\smrep$ is invariant under the mapping class group action \eqref{eq:mcg_action}. Therefore, there is a well-defined measure-preserving action 
\[
\PMCG(\Sigma_n)\acts (\smrep,\nu_{\mathcal G}).
\]

We end this short introduction by stating a crucial property of Deroin-Tholozan representations: namely, Deroin-Tholozan representations are \textit{totally elliptic}. It is meant to be understood as follows.

\begin{prop}\label{prop:everything_is_elliptic}
Let $[\phi]\in \smrep$. Then the image under the representation $\phi\colon \pi_1(\Sigma_n)\to \psl$ of any non-trivial homotopy class of loops freely homotopic to a simple closed curve is elliptic.
\end{prop}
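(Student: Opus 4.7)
The plan is to combine the additivity of volume from Theorem \ref{thm:volume}(2) with the Deroin--Tholozan bound \eqref{eq:inequality_euler_characteristic} applied to each of the two pieces cut out by the curve. Let $\gamma$ be a simple closed curve representing the given free homotopy class; since the rotation angle depends only on the conjugacy class, I may work with $\gamma$ itself. By the Jordan curve theorem, $\gamma$ separates $\Sigma_n$ into two subsurfaces $S_1,S_2$, each a disk with some subset of the punctures; let $I\subset\{1,\ldots,n\}$ record which punctures lie in $S_1$ and set $m:=|I|$. The case $m\in\{1,n-1\}$ is peripheral: $\gamma$ is then freely homotopic to $c_i^{\pm1}$ for some $i$ and $\phi(\gamma)$ is elliptic by hypothesis. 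So I may assume $2\leq m\leq n-2$, which makes each $S_i$ a punctured sphere with at least $3$ punctures, so that \eqref{eq:inequality_euler_characteristic} applies to each piece.

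Suppose for contradiction that $\phi(\gamma)$ is not elliptic. Write $t_1,t_2$ for the values of $\overline\vartheta$ at $\phi(\gamma)$ viewed as a peripheral element of $S_1$ and of $S_2$, respectively. Inspection of the three non-elliptic types (hyperbolic, parabolic, identity), together with the fact that the inverse of a positively parabolic element is negatively parabolic, shows $(t_1,t_2)\in\{0,2\pi\}^2$. Combining the additivity of volume with the volume formula \eqref{eq:volume_euler_class} applied on each piece, and with $\vol([\phi])=-\lambda$, yields the identity
\begin{equation*}
k_1+k_2 \;=\; n-1+\frac{t_1+t_2}{2\pi},
\end{equation*}
where $k_i$ denotes the relative Euler class of $\phi\restriction_{\pi_1(S_i)}$.

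On the other hand, inequality \eqref{eq:inequality_euler_characteristic} on $S_1$ reads $k_1\leq\max\{m-1,(2\pi)^{-1}(\sum_{i\in I}\alpha_i+t_1)\}$. The strict bound $\sum_{i\in I}\alpha_i<2\pi m$, which follows from $\alpha_i\in(0,2\pi)$, together with the integrality of $k_1$ and $t_1\in\{0,2\pi\}$, gives $k_1\leq m-1+t_1/(2\pi)$; the symmetric bound holds for $k_2$. Summing contradicts the displayed identity, and therefore $\phi(\gamma)$ must be elliptic.

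The one genuinely delicate point is the orientation bookkeeping: $\gamma$ inherits opposite orientations as a peripheral loop of $S_1$ and of $S_2$, and this is what guarantees that $(t_1,t_2)$ depends only on the conjugacy class of $\phi(\gamma)$ and that the restrictions $\phi\restriction_{\pi_1(S_i)}$ fit into Theorem \ref{thm:volume}(2) as stated. Once this is set up correctly the three non-elliptic cases collapse into a single numerical contradiction; the peripheral and null-homotopic cases are immediate.
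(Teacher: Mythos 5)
Your proof is correct and follows essentially the same route as the paper: cut along the simple closed curve, combine additivity of the volume with the Euler-class bound \eqref{eq:inequality_euler_characteristic} on each piece, and derive a numerical contradiction with the identity $k_1+k_2=n-1+(t_1+t_2)/(2\pi)$, which is exactly the paper's equation \eqref{eq:oula}. The only difference is organizational: the paper first shows $\overline\vartheta(\phi_1(a))+\overline\vartheta(\phi_2(a))\in\{0,2\pi\}$ and treats the two cases separately, whereas you assume non-ellipticity and fold all non-elliptic types into the single uniform bound $k_i\le m_i-1+t_i/(2\pi)$, a slightly cleaner packaging of the same argument.
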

Proposition \ref{prop:everything_is_elliptic} is a slight generalization of Lemma 3.2 in \cite{DeTh16}. Even if, technically speaking, Deroin-Tholozan only consider specific curves, Proposition \ref{prop:everything_is_elliptic} is essentially proven in \cite{DeTh16}.
\begin{proof}[Proof of Proposition \ref{prop:everything_is_elliptic}]
Let $a\in \pi_1(\Sigma_n)$ be a non-trivial homotopy class of loops freely homotopic to a simple closed curve. This simple closed curve is uniquely determined up to free homotopy. In a slight abuse of notation we denote by $a$ both the homotopy class and the associated simple closed curve.

If $a$ is homotopic to a puncture, then $\phi(a)$ is elliptic by definition of the relative character variety. Otherwise, $a$ separates $\Sigma_n$ into two surfaces $S_1\sqcup_a S_2= \Sigma_n$ of negative Euler characteristic. Let $\phi_1$ and $\phi_2$ denote the restrictions of $\phi$ to $\pi_1(S_1)$ and $\pi_1(S_2)$. The curve $a$ also determines a partition of the set $\{1,\ldots,n\}$ into two subsets $J_1$ and $J_2$ of respective cardinality $m_1$ and $m_2$. Theorem \ref{thm:volume} implies
\[
\vol([\phi_i])=2\pi k([\phi_i])-\sum_{j\in J_i}\alpha_j-\overline\vartheta(\phi_i(a)),\quad i=1,2.
\]
Since $[\phi]$ is Deroin-Tholozan,
\[
\vol([\phi])=2\pi(n-1)-\sum_{i=1}^n\alpha_i.
\]
By additivity of the volume (Theorem \ref{thm:volume}), $\vol([\phi])=\vol([\phi_1])+\vol([\phi_2])$ and thus
\begin{equation}\label{eq:oula}
2\pi(n-1)+\overline\vartheta(\phi_1(a))+\overline\vartheta(\phi_2(a))=2\pi\big(k([\phi_1])+k([\phi_2])\big).
\end{equation}
Because of inequality \eqref{eq:inequality_euler_characteristic}, it holds $k([\phi_i])\leq m_i$ for $i=1,2$. So, recalling that $m_1+m_2=n$, we deduce from \eqref{eq:oula} that
\[
\overline\vartheta(\phi_1(a))+\overline\vartheta(\phi_2(a))\leq 2\pi.
\]
By construction $\phi_1(a)=\phi_2(a)^{-1}$. Thus, the sum $\overline\vartheta(\phi_1(a))+\overline\vartheta(\phi_2(a))$, being at most $2\pi$, is either $0$ or $2\pi$.

Assume first that $\overline\vartheta(\phi_1(a))+\overline\vartheta(\phi_2(a))=0$. Then both $\overline\vartheta(\phi_1(a))$ and $\overline\vartheta(\phi_2(a))$ vanish. With this extra information, our application of inequality \eqref{eq:inequality_euler_characteristic} to $[\phi_i]$ can be refined and now gives $k([\phi_i])\leq m_i-1$ for $i=1,2$. This contradicts \eqref{eq:oula}.

Assume now that $\overline\vartheta(\phi_1(a))+\overline\vartheta(\phi_2(a))=2\pi$. Then \eqref{eq:oula}, together with the inequalities $k([\phi_i])\leq m_i$ for $i=1,2$, imply that $k([\phi_i])= m_i$ for $i=1,2$. For inequality \eqref{eq:inequality_euler_characteristic} to hold for $[\phi_1]$ and $[\phi_2]$, one must necessarily have $\overline\vartheta(\phi_1(a))>0$ and $\overline\vartheta(\phi_2(a))>0$. Therefore, $\overline\vartheta(\phi_i(a))\in(0,2\pi)$ for $i=1,2$ and we conclude that $\phi(a)$ is elliptic.
\end{proof}

\begin{rem}
If $n=3$ or $n=4$, then the converse of Proposition \ref{prop:everything_is_elliptic} holds. Namely, if $[\phi]\in\Rep_\alpha(\Sigma_{n},\psl)$ is totally elliptic and $\alpha$ satisfies the angles condition \eqref{eq:angle_cond}, then $[\phi]$ is Deroin-Tholozan. This relies on the following dichotomy for the case $n=3$ 
\cite[\S 1.2]{DeTh16}. If $n=3$ and $[\phi]\in\Rep_\alpha(\Sigma_{n},\psl)$, then one of the following holds:
\begin{itemize}
\item $\alpha_1+\alpha_2+\alpha_3\in (0,2\pi]$ and $k([\phi])=1$, or
\item $\alpha_1+\alpha_2+\alpha_3\in [4\pi,6\pi)$ and $k([\phi])=2$.
\end{itemize}  
In particular, $\alpha_1+\alpha_2+\alpha_3>4\pi$ implies $k([\phi])=2$ and hence $[\phi]$ is Deroin-Tholozan. If $n=4$, consider the pants decomposition $\Sigma_4=S_1\sqcup_{b_1} S_2$, where $b_1$ is a simple closed curve in the free homotopy class of $c_2^{-1}c_1^{-1}$ (see Figure \ref{fig:symmetric_case}). Let $[\phi]\in\Rep_\alpha(\Sigma_{n},\psl)$ and denote by $[\phi_i]$ the restriction of $[\phi]$ to $\pi_1(S_i)$. Assume that $\alpha$ satisfies the angles condition. Because of the above dichotomy, it must hold $k([\phi_i])=2$ for $i=1,2$, otherwise $\alpha_1+\alpha_2+\alpha_3+\alpha_4<6\pi$, contradicting the angles conditions. Hence $[\phi]$ is Deroin-Tholozan. The same argument does not apply if $n\geq 5$ and the question whether totally elliptic representations are Deroin-Tholozan remains open.
\end{rem}

\subsection{Relation to symplectic geometry} To prove that the $\PMCG(\Sigma_n)$-action on $\smrep$ is ergodic we follow a method developed by Goldman-Xia in 
\cite{GoXi09} and used by March\'e-Wolff in 
\cite{MaWo15}. It relies essentially on the observation that a Dehn twist $\tau_a$ along a non-trivial simple closed curve $a$ on $\Sigma_n$ is closely related to some Hamiltonian flow. This crucial observation is explained in this section.

Recall that we introduced a function $\vartheta$ that maps smoothly elliptic elements in $\psl$ to their rotation angle in $(0,2\pi)$. Proposition \ref{prop:everything_is_elliptic} says that for any non-trivial homotopy class $a\in \pi_1(\Sigma_n)$ freely homotopic to a simple closed curve and any Deroin-Tholozan representation $\phi\colon \pi_1(\Sigma_n)\to \psl$, the image $\phi(a)$ is elliptic. Consider the following function
\begin{align}
\vartheta_a\colon &\smrep\longrightarrow (0,2\pi) \nonumber\\
&[\phi]\mapsto \vartheta(\phi(a)). \label{eq:hamiltonian_fct}
\end{align}
Let $\Phi_{a}^t\colon \smrep\to\smrep$ denote the Hamiltonian flow of $\vartheta_a$ at time $t\in \R$. The flow $\Phi_{a}^t$ is called the \textit{twist flow} of $(\vartheta,a)$. Twists flows were introduced by Goldman in 
\cite{Gol84}. 

Recall that $a\in \pi_1(\Sigma_n)$ determines a unique (up to free homotopy) simple closed curve which we also denote by $a$. Cutting $\Sigma_n$ along $a$ determines two surfaces $S_1\sqcup_a S_2= \Sigma_n$. The computations conducted in 
\cite[Prop. 3.3]{DeTh16} from the original definition of twist flows by Goldman show that
\begin{equation}\label{eq:twist_flow}
\Phi_{a}^{\vartheta_a([\phi])/2}([\phi])\colon c_i\mapsto \left\{\begin{array}{ll}
\phi(c_i)  & \text{if } c_i\in\pi_1(S_1),\\
\phi(a)\phi(c_i)\phi(a)^{-1} & \text{if } c_i\in\pi_1(S_2).
\end{array}\right.
\end{equation}
Goldman-Xia observed in 
\cite{GoXi09} that the representation \eqref{eq:twist_flow} corresponds precisely to the representation obtained by letting the Dehn twist $\tau_a\in \PMCG(\Sigma_n)$ along the curve $a$ act on $[\phi]$. This is the crucial observation mentioned in introduction that connects the symplectic geometry of $\smrep$ to the action of $\PMCG(\Sigma_n)$. Formally, the following holds.

\begin{prop}\label{prop:Dehn_twists_Ham_flow}
Let $a\in \pi_1(\Sigma_n)$  be a non-trivial homotopy class of loops freely homotopic to a simple closed curve on $\Sigma_n$. Then
\[
\tau_a [\phi]=\Phi_{a}^{\vartheta_a([\phi])/2}([\phi]),\quad \forall [\phi]\in\smrep.
\]
\end{prop}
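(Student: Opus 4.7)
The approach is to verify the equality by evaluating both sides on the standard generators $c_1,\ldots,c_n$ of $\pi_1(\Sigma_n)$. The right-hand side is given explicitly by formula \eqref{eq:twist_flow}, which was derived from Goldman's original definition of twist flows in \cite[Prop.~3.3]{DeTh16}; note that Proposition \ref{prop:everything_is_elliptic} guarantees that $\vartheta_a([\phi])$ is well defined, so the expression $\Phi_a^{\vartheta_a([\phi])/2}([\phi])$ makes sense. The whole proof therefore reduces to showing that the Dehn twist $\tau_a$ acts on $[\phi]$ by exactly the same formula.

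To carry this out, I would lift $\tau_a$ through the Dehn-Nielsen embedding to a concrete automorphism $\hat{\tau}_a\in\Aut(\pi_1(\Sigma_n))$. Cutting $\Sigma_n$ along a representative of the simple closed curve $a$ produces a decomposition $\Sigma_n=S_1\sqcup_a S_2$, and the standard local model of a Dehn twist in a tubular neighborhood of $a$ (see e.g.\ \cite[\S 3.1]{FaMa12}) shows that, after choosing a basepoint on $a$ and orientations compatibly, $\hat{\tau}_a$ restricts to the identity on $\pi_1(S_1)$ and to conjugation by $a$ on $\pi_1(S_2)$. In the degenerate case where $a$ is freely homotopic to some puncture loop $c_j$, the side $S_2$ is a once-punctured disk whose fundamental group is cyclic generated by $c_j$, so the formula collapses to the tautology $\phi(c_j)\mapsto \phi(a)\phi(c_j)\phi(a)^{-1}=\phi(c_j)$.

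Composing $\hat{\tau}_a$ with $\phi$ then sends $c_i\in\pi_1(S_1)$ to $\phi(c_i)$ and $c_i\in\pi_1(S_2)$ to $\phi(a)\phi(c_i)\phi(a)^{-1}$, matching \eqref{eq:twist_flow} verbatim. Passing to conjugacy classes in $\smrep$ yields $\tau_a[\phi]=\Phi_a^{\vartheta_a([\phi])/2}([\phi])$.

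The main obstacle is matching sign and orientation conventions on the two sides: the Dehn twist in $\PMCG$ carries a preferred (right-handed) direction, the curve $a$ must be oriented to define both $\phi(a)$ and the twist flow, and Goldman's formula carries a normalization constant (responsible for the factor of $1/2$ in the exponent $\vartheta_a([\phi])/2$) that depends on the chosen bilinear form $\trace$. These conventions have already been fixed once and for all in the definitions of $\vartheta_a$ and of $\tau_a$; provided one reads them off from the derivation of \eqref{eq:twist_flow} in \cite{DeTh16}, the identification of the two formulas is purely combinatorial.
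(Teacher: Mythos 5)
Your proposal matches the paper's treatment: the paper likewise takes the twist-flow formula \eqref{eq:twist_flow} from \cite[Prop.~3.3]{DeTh16} and invokes the Goldman--Xia observation that the Dehn twist $\tau_a$, viewed via Dehn--Nielsen as the outer automorphism acting trivially on $\pi_1(S_1)$ and by conjugation by $a$ on $\pi_1(S_2)$, induces exactly the same map on conjugacy classes of representations. Your write-up simply fills in the standard local model of the twist and the degenerate peripheral case, both of which are correct, so the argument is sound and essentially identical to the paper's.
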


Proposition \ref{prop:Dehn_twists_Ham_flow} is used as such in
\cite[Prop. 6.5]{MaWo15}. The analogue of Proposition \ref{prop:Dehn_twists_Ham_flow} for $\SU2$-character varieties can be found in 
\cite[Prop. 5.1]{GoXi09}.

\subsection{Ergodic actions}
A measure preserving action of a group $G$ on a probability measure space $(X,\mu)$ is \textit{ergodic} if for all measurable sets $U\subset X$
\[
gU=U,\quad \forall g\in G\quad \Longrightarrow \quad \mu(U)\in\{0,1\}.
\]
Ergodicity means that the dynamical system induced by the $G$-action on $X$ admits no non-trivial subsystems. Ergodic systems exhibit a certain level of chaos through their dynamics: mixing systems are ergodic and ergodic systems have almost only dense orbits (provided that the measure is Borel). The standard example of ergodic actions are irrational rotations of the circle, see e.g.\ 
\cite[Prop. 2.16]{EiWa11}.

Ergodicity can be characterized in terms of invariant functions. The regularity class of those functions can be restricted as long as it contains the indicator functions of all measurable sets. For the purpose of this note, and in view of Lemma \ref{lem:rectangle_trick}, we choose to characterize ergodicity in terms of integrable functions. 
\begin{lem}\label{lem:ergodic_1}
A measure preserving action of a group $G$ on a probability measure space $(X,\mu)$ is ergodic if and only if every $G$-invariant integrable function $f\colon X\to \R$ is constant almost everywhere.
\end{lem}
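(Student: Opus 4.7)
The plan is to prove the two implications separately, both of which are quite standard. Both directions hinge on passing between measurable invariant sets and their indicator functions, plus a level-set argument to go from functions back to sets.

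For the easy direction ($\Leftarrow$), I would take any measurable $G$-invariant set $U\subset X$ and consider its indicator $\mathbf{1}_U\colon X\to \R$. Since $\mu$ is a probability measure, $\mathbf{1}_U$ is integrable, and $G$-invariance of $U$ translates immediately into $G$-invariance of $\mathbf{1}_U$. By hypothesis, $\mathbf{1}_U$ is constant almost everywhere, so that constant must be either $0$ or $1$. This forces $\mu(U)\in\{0,1\}$, which is ergodicity.

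For the non-trivial direction ($\Rightarrow$), let $f\colon X\to \R$ be integrable and $G$-invariant. For every $c\in \R$ the superlevel set $U_c:=\{x\in X : f(x)>c\}$ is measurable, and $G$-invariance of $f$ makes each $U_c$ a $G$-invariant measurable set. Ergodicity then gives $\mu(U_c)\in\{0,1\}$. The key point is that $c\mapsto \mu(U_c)$ is non-increasing and, because $f$ is integrable, tends to $1$ as $c\to -\infty$ and to $0$ as $c\to +\infty$. I would then set
\[
c_0:=\sup\{c\in \R : \mu(U_c)=1\},
\]
which is finite by the two limits above. Approaching $c_0$ from below along a countable sequence shows $f\geq c_0$ almost everywhere, and approaching from above shows $f\leq c_0$ almost everywhere, so $f=c_0$ almost everywhere.

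There is no real obstacle here; the only minor care needed is to use \emph{countable} sequences of levels when intersecting the full-measure sets $U_c$ (or their complements), since arbitrary uncountable intersections of conull sets need not be conull. The integrability hypothesis is only used to ensure that $c_0$ is a genuine real number (not $\pm\infty$); if one only wanted to conclude that $f$ is almost everywhere equal to some value in $[-\infty,+\infty]$, integrability would be unnecessary, but for the statement as phrased it is the cleanest assumption.
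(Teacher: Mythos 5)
Your proof is correct and complete; note that the paper does not prove this lemma at all but simply refers to Einsiedler--Ward, and your argument (indicator functions of invariant sets for one direction, invariant superlevel sets $\{f>c\}$ and the threshold $c_0=\sup\{c:\mu(U_c)=1\}$ for the other) is exactly the standard textbook proof one finds there. Your care about restricting to countable families of levels, and your observation that integrability is only needed to keep $c_0$ finite, are both accurate.
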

We refer the reader to\ 
\cite{EiWa11} for the proof of Lemma \ref{lem:ergodic_1} and for further consideration on ergodic actions.

Checking that a function is constant almost everywhere can be done locally. This strategy was employed by March\'e-Wolff in 
\cite{MaWo15}. The statement is the following. Assume that $X$ is a topological space and $\mu$ is a strictly positive Borel measure on $X$, i.e.\ $\mu(U)>0$ for every nonempty open set $U\subset X$.

\begin{lem}\label{lem:ergodic_2}
Let $f\colon X\to \R$ be an integrable function. Assume that there exists an open set $\E\subset X$ such that
\begin{enumerate}
\item $\E$ is connected,
\item $\mu(\E)=1$,
\item for all $x\in \E$, there exists an open set $U_x\subset \E$ containing $x$ such that $f$ is constant almost everywhere on $U_x$.
\end{enumerate}
Then $f$ is constant almost everywhere.
\end{lem}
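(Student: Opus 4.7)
The plan is to promote the local almost-everywhere constancy of $f$ on the neighborhoods $U_x$ to a global almost-everywhere constancy on $\E$, and then to observe that $\mu(\E)=1$ finishes the job. The engine of the argument is connectedness of $\E$, together with strict positivity of $\mu$ on nonempty open sets.

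First I would assign to each $x\in\E$ a real number $c(x)\in\R$, namely the almost-everywhere constant value taken by $f$ on the neighborhood $U_x$ provided by hypothesis (3). This is well defined: any nonempty open subset of $X$ has strictly positive $\mu$-measure, so the set where $f$ equals $c(x)$ inside $U_x$ is nonempty, and the value $c(x)$ is uniquely determined. Next I would check that the function $c\colon \E\to\R$ is \emph{locally constant}. Indeed, for any $x,y\in\E$ with $U_x\cap U_y\neq\emptyset$, the intersection is a nonempty open subset of $\E$, hence has positive $\mu$-measure; but $f\equiv c(x)$ a.e.\ on $U_x$ and $f\equiv c(y)$ a.e.\ on $U_y$, so both hold a.e.\ on the intersection, forcing $c(x)=c(y)$. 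In particular $c$ is constant on each $U_y$, proving local constancy.

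Now I would partition $\E$ into the level sets $V_r:=\{x\in\E\,:\,c(x)=r\}$ for $r\in\R$. By local constancy each $V_r$ is open in $\E$, the $V_r$ are pairwise disjoint, and their union is all of $\E$. Since $\E$ is connected, exactly one $V_r$ is nonempty and equals $\E$. Call this common value $c_0$; then $f\equiv c_0$ almost everywhere on every $U_x$, $x\in\E$.

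To conclude that $f\equiv c_0$ almost everywhere on $\E$ itself, I would invoke a countable subcover: the $\mu$-measure of $\{x\in\E\,:\,f(x)\neq c_0\}$ can be estimated by covering $\E$ with countably many $U_x$'s, provided $X$ (and hence $\E$) is, say, Lindel\"of or second countable. In the concrete application to $\smrep\cong \CP^{n-3}$ this causes no difficulty, since the underlying space is a manifold. The main obstacle to watch for is therefore ensuring this countable-covering step is legitimate in the stated generality; once that is granted, $f\equiv c_0$ almost everywhere on $\E$, and combined with $\mu(\E)=1$ this gives $f\equiv c_0$ almost everywhere on $X$, as required.
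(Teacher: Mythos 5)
Your proposal is correct and follows essentially the same route as the paper: assign to each $x$ the a.e.\ constant value of $f$ on $U_x$, show this assignment is locally constant because overlaps $U_x\cap U_y$ are nonempty open sets of positive measure, and invoke connectedness of $\E$ to globalize. Your extra remark about needing a countable subcover (Lindel\"of/second countability) to pass from ``$f\equiv c_0$ a.e.\ on each $U_x$'' to ``$f\equiv c_0$ a.e.\ on $\E$'' is a step the paper leaves implicit, and it is indeed harmless in the intended application to a manifold.
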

\begin{proof}
Define the function $F\colon \E\to \R$ by
\[
F(x):=\frac{1}{\mu(U_x)}\int_{U_x}f\,d\mu.
\]
Informally, $F(x)$ is the constant value reached by $f$ almost everywhere on $U_x$. For every $y\in U_x$, the set $U_x\cap U_y$ is nonempty and thus has positive measure by assumption. Moreover
\[
\frac{1}{\mu(U_x)}\int_{U_x}f\,d\mu=\frac{1}{\mu(U_x\cap U_y)}\int_{U_x\cap U_y}f\,d\mu=\frac{1}{\mu(U_y)}\int_{U_y}f\,d\mu.
\]
So, $F(x)=F(y)$. This means that $F$ is locally constant on $\E$ (and not only almost everywhere). For $\E$ is connected, $F$ is thus constant on $\E$.

Now, because $F\restriction_{U_x}$ is constant, $f$ and $F$ coincide almost everywhere on $U_x$ for every $x\in\E$. Hence $f=F$ almost everywhere on $\E$. Since $F$ is constant on $\E$ and $\mu(\E)=1$, we conclude that $f$ is constant almost everywhere.
\end{proof}

\section{The skeleton of the proof}\label{sec:skeleton}

According to Lemma \ref{lem:ergodic_1}, it is sufficient to show that every $\PMCG(\Sigma_n)$-invariant integrable function $f\colon \smrep\to \R$ is constant almost everywhere in order to prove Theorem \ref{thm:ergodicity}. The tool for this is Lemma \ref{lem:ergodic_2}. We apply the latter by constructing an open set $\E$ that satisfies the required hypotheses for any $\PMCG(\Sigma_n)$-invariant integrable function $f\colon \smrep\to \R$. 

In this section, we first define the open set $\E$. We then state two technical lemmata, namely Lemma \ref{lem:key_lemma} and Lemma \ref{lem:rectangle_trick}. Their proofs are postponed to Sections \ref{sec:rectangle_trick} and \ref{sec:key_lemma}. In a third and last part, we prove that $\E$ satisfies all three conditions of Lemma \ref{lem:ergodic_2}, assuming that the two lemmata mentioned above hold. 

\subsection{The set $\E$}\label{sec:set_E} Recall that we fixed generators $\{c_1,\ldots,c_n\}$ of $\pi_1(\Sigma_n)$ such that $c_1\cdot\ldots\cdot c_n=1$. We introduce the following $2(n-3)$ elements of $\pi_1(\Sigma_n)$: for every $i=1,\ldots,n-3$, let
\begin{align*}
b_i&:=c_{i+1}^{-1}c_i^{-1}\cdot\ldots\cdot c_1^{-1},\\
d_i&:=c_{i+2}^{-1}c_{i+1}^{-1}.
\end{align*}
The free homotopy classes of loops corresponding to $c_i,b_i,d_i$ can be represented by oriented simple closed curves, also denoted $c_i,b_i,d_i$, as illustrated on Figure \ref{fig:curves_b_i_d_i}. 

\begin{figure}[h]
\begin{center}
\resizebox{\columnwidth}{!}{
\begin{tikzpicture}[framed,font=\sffamily,decoration={
    markings,
    mark=at position 0.7 with {\arrow{>}}}]
 \draw (0,0) to (9,0);
 \draw[postaction={decorate}] (0,0) to[bend left=15] node[left]{$c_1$} (0,1);
 \draw[dashed] (0,0) to[bend right=15] (0,1);
 \draw (0,1) to[bend right=40] (1,2);
 \draw[postaction={decorate}] (1,2) to[bend right=15]  (2,2);
 \draw (1,2) to[bend left=15] node[above]{$c_2$} (2,2);
 \draw (2,2) to[bend right=40] (3,1);
 \draw (3,1) to[bend right=40] (4,2);
 \draw[postaction={decorate}] (4,2) to[bend right=15] (5,2);
 \draw (4,2) to[bend left=15] node[above]{$c_3$} (5,2);
  \draw (5,2) to[bend right=40] (6,1);
 \draw (6,1) to[bend right=40] (7,2);
 \draw[postaction={decorate}] (7,2) to[bend right=15]  (8,2);
 \draw (7,2) to[bend left=15] node[above]{$c_4$}(8,2);
 \draw (8,2) to[bend right=40] (9,1);
 \draw[dashed] (9,0) to (10,0);
 \draw[dashed] (9,1) to[bend right=40] (10,2);
 
 \draw[postaction={decorate}, red!50!yellow, thick] (3,1) to[bend right=15] node[left, thick]{$b_1$} (3,0);
 \draw[dashed, red!50!yellow] (3,0) to[bend right=15] (3,1);
 \draw[postaction={decorate}, red!50!yellow, thick] (6,1) to[bend right=15] node[left, thick]{$b_2$} (6,0);
 \draw[dashed, red!50!yellow] (6,0) to[bend right=15] (6,1);
 \draw[postaction={decorate}, red!50!yellow, thick] (9,1) to[bend right=15] node[left, thick]{$b_3$} (9,0);
 \draw[dashed, red!50!yellow] (9,0) to[bend right=15] (9,1);
 
 \draw[postaction={decorate}, red!40!blue, thick] (5.15,1.5) to[bend left=35] (0.85,1.5) ;
 \draw[red!40!blue] (1.4,.9) node{$d_1$};
 \draw[dashed, red!40!blue] (0.85,1.5) to[bend right=28] (5.15,1.5);
 
 \draw[postaction={decorate}, red!40!blue, thick] (8.15,1.5) to[bend left=35] (3.85,1.5);
 \draw[red!40!blue] (4.7,.8) node{$d_2$};
 \draw[dashed, red!40!blue] (3.85,1.5) to[bend right=28] (8.15,1.5);
 
 \draw[postaction={decorate}, red!40!blue, thick] (10,1.1) to[bend left=35] (6.85,1.5);
 \draw[red!40!blue] (7.5,.8) node{$d_3$};
 \draw[dashed, red!40!blue] (6.85,1.5) to[bend right=28] (10,1.2);
 
 \draw (0,2) node{\Huge{$\Sigma_n$}};
\end{tikzpicture}
}
\caption{The simple closed curves $b_1,\ldots,b_{n-3}$ and $d_1,\ldots,d_{n-3}$, and the peripheral curves $c_1,\ldots,c_n$. This illustration is modelled on \cite[Fig. 2]{DeTh16}.}
\label{fig:curves_b_i_d_i}
\end{center}
\end{figure}
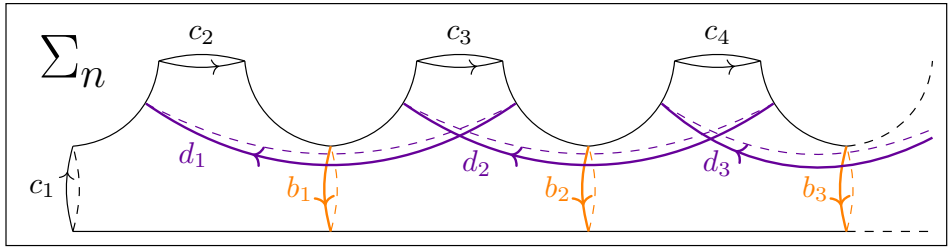

Deroin-Tholozan proved in \cite[Prop. 3.3]{DeTh16} that the Hamiltonian flows of $\vartheta_{b_1},\ldots,\vartheta_{b_{n-3}}$ are $\pi$-periodic and define a symplectic toric manifold structure on $(\smrep,\omega_{\mathcal G})$. The associated moment map $\mu:=(\vartheta_{b_1},\ldots,\vartheta_{b_{n-3}})$ maps $\smrep$ to a convex polytope $\Delta$ inside $\R^{n-3}$. We denote by $\ring\Delta$ the interior of $\Delta$. The subspace $\mu^{-1}(\ring\Delta)\subset\smrep$ is open and dense. The fibres of $\mu$ over $\ring\Delta$ are Lagrangian tori.

Because of the symplectic toric structure on $\smrep$, for any $i=1,\ldots,n-3$, the Hamiltonian flow $\Phi_{b_i}$ has the following orbit structure. Its orbits are either fixed points or circles of length $\pi$. Since any of the curves $d_1,\ldots,d_{n-3}$ can be mapped to $b_1$ by a cyclic permutation of the punctures, the Hamiltonian flows $\Phi_{d_1},\ldots,\Phi_{d_{n-3}}$ have the same orbit structure as $\Phi_{b_1}$.

\begin{dfn}
We call the orbit of $[\phi]\in\smrep$ under the combined Hamiltonian flows $\Phi_{b_1},\ldots,\Phi_{b_{n-3}}$ \textit{regular} if it is homeomorphic to an $(n-3)$-torus, equivalently if $\mu([\phi])\in\ring\Delta$. It is called \textit{irrational} if it is regular and $\vartheta_{b_i}([\phi])\in \R\setminus \pi\Q$ for every $i=1,\ldots,n-3$, equivalently $\mu([\phi])\in \ring\Delta\cap (\R\setminus \pi\Q)^{n-3}$.
\end{dfn}

As for any symplectic manifold, there is a Poisson bracket on $\smrep$ associated to $\omega_{\mathcal G}$:
\[
\big\{\cdot,\cdot\big\}\colon C^\infty\left(\smrep\right)\times C^\infty\left(\smrep\right)\to C^\infty\left(\smrep\right).
\]
It is defined as follows: for two smooth functions $\zeta_1,\zeta_2\colon \smrep\to\R$ with Hamiltonian vector fields $X_{\zeta_1},X_{\zeta_2}$, let 
\begin{equation}\label{eq:poisson_bracket}
\big\{\zeta_1,\zeta_2\big\}:=\omega_{\mathcal G}(X_{\zeta_1},X_{\zeta_2})=d\zeta_2(X_{\zeta_1}).
\end{equation}
We denote  by $(\tau_{b_i})^{m_i}d_i$ the simple closed curve obtained from $d_i$ by applying $m_i$ iterations of the Dehn twist $\tau_{b_i}$. Inspired by the work of March\'e-Wolff \cite{MaWo16}, we introduce 
\[
\E:=\left\{[\phi]\in\mu^{-1}(\ring\Delta) : \forall i=1,\ldots,n-3,\,\exists m_i\in \Z,\, \big\{\vartheta_{b_i},\vartheta_{(\tau_{b_i})^{m_i}d_i}\big\}\big([\phi]\big)\neq 0\right\}.
\]
Note that $\E\subset \smrep$ is open and measurable since
\[
\E=\mu^{-1}(\ring\Delta)\cap\bigcap_{i=1}^{n-3}\bigcup_{m_i\in \Z} \big\{\vartheta_{b_i},\vartheta_{(\tau_{b_i})^{m_i}d_i}\big\}^{-1}\big(\R\setminus \{0\}\big).
\]
We claim that $\E$ satisfies all the hypotheses of Lemma \ref{lem:ergodic_2}. This follows from Lemma \ref{lem:property1-2} and Lemma \ref{lem:property3} below.

\begin{rem}\label{rem:symmetric_case}
It is worth pointing out that the definition of the set $\E$ does not depend on the function $f\colon \smrep\to \R$ that we fixed previously. One may wonder if $\E$ is actually distinct from $\mu^{-1}(\ring\Delta)$. The answer in general remains unknown to the author; however there exist special symmetric cases where the answer is yes. 

Assume for simplicity that $n=4$. Recall that for $n=4$ the character variety of Deroin-Tholozan representations is symplectomorphic to the 2-sphere. Assume further that $\alpha_1=\alpha_2=\alpha_3=\alpha_4$. In this case, the Hamiltonian flows $\Phi_{b_1}$ and $\Phi_{d_1}$ are rotations around two perpendicular axes of the 2-sphere. We can think of the two fixed points of $\Phi_{b_1}$ as the poles of the sphere and the two fixed points of $\Phi_{d_1}$ as two diametrically opposite points on the equator (see Figure \ref{fig:symmetric_case}). Denote the fixed points of $\Phi_{d_1}$ by $[\phi_1]$ and $[\phi_2]$. The equator is the $\Phi_{b_1}$-orbit characterized by $(\vartheta_{b_1})^{-1}(\pi)$. Hence it holds $\tau_{b_1}[\phi_1]=[\phi_2]$ and $\tau_{b_1}[\phi_2]=[\phi_1]$ by Proposition \ref{prop:Dehn_twists_Ham_flow}. In particular, because the Hamiltonian vector field of $\vartheta_{d_1}$ vanishes at $[\phi_1]$ and $[\phi_2]$, it holds
\[
\big\{\vartheta_{b_1},\vartheta_{d_1}\big\}\big((\tau_{b_1})^m [\phi_1]\big)=0,\quad \forall m\in\Z.
\]
Anticipating Lemma \ref{lem:poisson_bracket_dehn_twist}, this implies 
\[
\big\{\vartheta_{b_1},\vartheta_{(\tau_{b_1})^m d_1}\big\}([\phi_1])=\big\{\vartheta_{b_1},\vartheta_{(\tau_{b_1})^m d_1}\big\}([\phi_2])=0,\quad \forall m\in\Z.
\]
Therefore $[\phi_1],[\phi_2]\in \mu^{-1}(\ring\Delta)\setminus \E$.

\begin{figure}[h!]
\begin{center}
\resizebox{9cm}{3.5cm}{%
\begin{tikzpicture}[framed,font=\sffamily,decoration={
    markings,
    mark=at position 0.7 with {\arrow{>}}}]
 \draw (0,0) to (6,0);
 \draw[postaction={decorate}] (0,0) to[bend left=15] node[left]{$c_1$} (0,1);
 \draw[dashed] (0,0) to[bend right=15] (0,1);
 \draw (0,1) to[bend right=40] (1,2);
 \draw[postaction={decorate}] (1,2) to[bend right=15]  (2,2);
 \draw (1,2) to[bend left=15] node[above]{$c_2$} (2,2);
 \draw (2,2) to[bend right=40] (3,1);
 \draw (3,1) to[bend right=40] (4,2);
 \draw[postaction={decorate}] (4,2) to[bend right=15] (5,2);
 \draw (4,2) to[bend left=15] node[above]{$c_3$} (5,2);
  \draw (5,2) to[bend right=40] (6,1);
  \draw[postaction={decorate}] (6,1) to[bend right=15]  (6,0);
  \draw (6,0) to[bend right=15] node[right]{$c_4$} (6,1);
 
 \draw[postaction={decorate}, red!50!yellow, thick] (3,1) to[bend right=15] node[left]{$b_1$} (3,0);
 \draw[dashed, red!50!yellow] (3,0) to[bend right=15] (3,1);
 
 \draw[postaction={decorate}, red!40!blue, thick] (5.15,1.5) to[bend left=35] (0.85,1.5) ;
 \draw[red!40!blue] (1.4,.9) node{$d_1$};
 \draw[dashed, red!40!blue] (0.85,1.5) to[bend right=28] (5.15,1.5);
 

	\draw (0,2) node{\Huge{$\Sigma_4$}};
 
\end{tikzpicture}
}
\resizebox{9cm}{7cm}{%
\begin{tikzpicture}[framed, decoration={
    markings,
    mark=at position 0.7 with {\arrow{>}}}]]
  \draw[fill=black!3] (0,0) circle (2cm);
  
   \draw[-latex] (3,-2.5) to (3,2.5); 
  \draw[dashed] (0,2) to (3,2);
  \draw (3,2) node[right]{\tiny{$\alpha_3+\alpha_4-2\pi$}};
  \draw[dashed] (2,0) to (3,0);
  \draw (3,0) node[right]{\tiny{$\pi$}};
  \draw[dashed] (0,-2) to (3,-2);
  \draw (3,-2) node[right]{\tiny{$4\pi-\alpha_1-\alpha_2$}};
  \draw[-latex] (2,1) to node[above]{$\vartheta_{b_1}$} (2.8,1) ;

  \draw[red!50!yellow] (0,2) to (0,-2);
  \draw[red!50!yellow, dashed] (0,2) to (0,3); 
  \draw[red!50!yellow, dashed] (0,-2) to (0,-3); 
  \fill[fill=red!50!yellow] (0,2) circle (2pt);
  \fill[fill=red!50!yellow] (0,-2) circle (2pt);
  
  \draw [-latex, thick, rotate=0, red!50!yellow] (-.7,2.4) arc [start angle=-190, end angle=160, x radius=.7cm, y radius=.2cm];
  \draw[red!50!yellow] (1.2,2.4) node{$\Phi_{b_1}$};
  
  \draw[postaction={decorate}, red!50!yellow] (-2,0) arc (180:360:2 and 0.6);
  \draw[red!50!yellow, dashed] (2,0) arc (0:180:2 and 0.6);
  
  \draw[postaction={decorate}, red!50!yellow] (-1,1.7) arc (180:360:1 and .2);
  
  \draw[red!40!blue] (-.57,-.57) to (.57,.57);
  \draw[red!40!blue, dashed] (-1,-1) to (-.57,-.57);
  \draw[red!40!blue, dashed] (.57,.57) to (1,1);
  \draw [-latex, thick, red!40!blue] (-.2,-.9) arc [start angle=330, end angle=-20, x radius=.5cm, y radius=.7cm];
  \draw[red!40!blue] (.4,-1) node{$\Phi_{d_1}$};
  
  \draw[postaction={decorate}, red!50!blue] (0,2) arc (90:-90:1.5 and 2);
  \draw[red!50!blue, dashed] (0,2) arc (90:270:1.5 and 2);

  \fill[fill=red!40!blue] (-.57,-.57) circle (2pt);
  \draw (-.7,-.57) node[above]{\small{$[\phi_1]$}};
  \fill[fill=red!40!blue] (.57,.57) circle (2pt);
  \draw (.4,.57) node[above]{\small{$[\phi_2]$}};
  
  \fill[fill=black] (0,0) circle (2pt); 
  
  \draw (-2.5,2) node{\Huge{$\CP^1$}};
  \draw (2.5,-3) node{\text{here: }$\alpha_1=\alpha_2=\alpha_3=\alpha_4$};
\end{tikzpicture}
}
\caption{On top: the 4-punctured sphere and the curves $b_1,d_1$. On the bottom: the flows of $\Phi_{b_1}$ and $\Phi_{d_1}$ seen as rotations around two perpendicular axes of the 2-sphere when $\alpha_1=\alpha_2=\alpha_3=\alpha_4$.}
\label{fig:symmetric_case}
\end{center}
\end{figure}
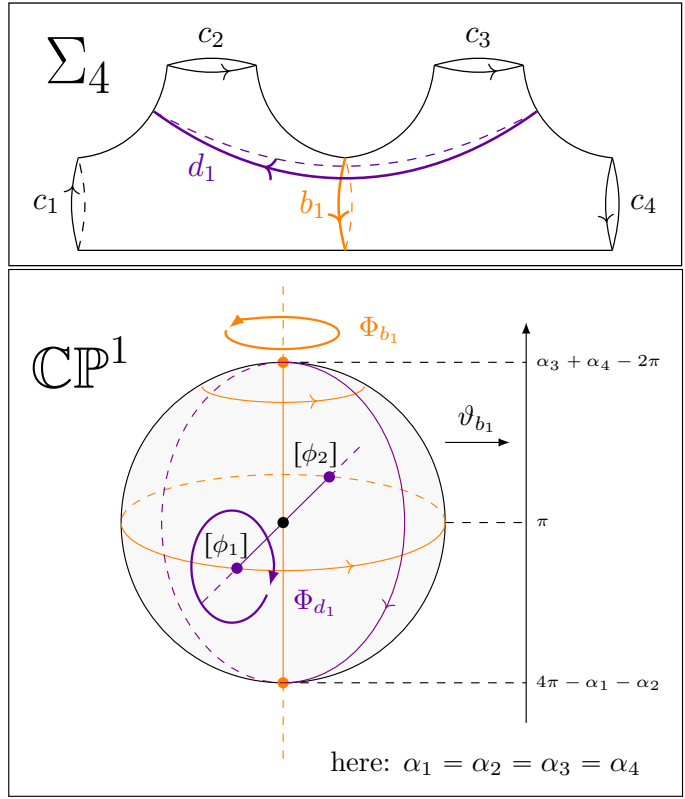
\end{rem}

\subsection{Two technical lemmata}\label{sec:two_lemmas}
The proof that $\E$ is connected and has full measure relies on the following Key Lemma and its corollary. The proof of the Key Lemma \ref{lem:key_lemma} is postponed to Section \ref{sec:key_lemma} and that of its corollary to Subsection \ref{sec:properties_1-2} below.

\begin{lem}[Key Lemma]\label{lem:key_lemma}
For every $i=1,\ldots,n-3$, every orbit of the Hamiltonian flow $\Phi_{b_i}$ contained inside $\mu^{-1}(\ring\Delta)$ contains at most two points at which $\big\{\vartheta_{b_i},\vartheta_{d_i}\big\}$ vanishes.

Moreover, if two such points exist, then they are diametrically opposite, i.e.\ they are images of each other under $\Phi_{b_i}^{t_0}$ where $t_0$ is half the minimal period of the corresponding orbit (here $t_0=\pi/2$).
\end{lem}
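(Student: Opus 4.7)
The plan is to reduce the claim to an analysis of the smooth function $f(t) := \vartheta_{d_i}(\Phi_{b_i}^t[\phi])$ along a single $\Phi_{b_i}$-orbit. By \eqref{eq:poisson_bracket}, $\{\vartheta_{b_i},\vartheta_{d_i}\}(\Phi_{b_i}^t[\phi]) = f'(t)$, so the zeros of the Poisson bracket along the orbit are precisely the critical points of $f$. Since the orbit is a circle of period $\pi$ by the symplectic-toric structure of \cite{DeTh16}, it suffices to show that $f$ has at most two critical points per period and that any two are antipodal.

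The curve $b_i$ separates $\Sigma_n = S_1 \sqcup_{b_i} S_2$ with $c_{i+1}\in\pi_1(S_1)$ and $c_{i+2}\in\pi_1(S_2)$. Extending \eqref{eq:twist_flow} to continuous time, $\Phi_{b_i}^t$ conjugates $\phi(c_{i+2})$ by the one-parameter subgroup $R(t) := \rot_{2t}(p)$ through $\phi(b_i)$, where $p := \fix\phi(b_i)\in\Hyp$, while fixing $\phi(c_{i+1})$. Hence
\[
\Phi_{b_i}^t[\phi](d_i) = R(t)\,\phi(c_{i+2})^{-1}\,R(t)^{-1}\,\phi(c_{i+1})^{-1}.
\]
Setting $q_j := \fix\phi(c_{i+j})$ for $j=1,2$, $\rho := d_\Hyp(p,q_2)$ and $\delta := d_\Hyp(p,q_1)$, I would apply the standard trace identity for a product of two rotations in $\psl$ together with the hyperbolic law of cosines applied to the triangle with vertices $p, q_1, R(t)q_2$ (whose angle at $p$ varies as $2t-\psi_0$ for some phase $\psi_0$ depending on $[\phi]$) to obtain
\[
\mathrm{tr}\bigl(\Phi_{b_i}^t[\phi](d_i)\bigr) = D_1 + D_2\cos(2t-\psi_0),\quad D_2 = 2\sin\tfrac{\alpha_{i+1}}{2}\sin\tfrac{\alpha_{i+2}}{2}\sinh\rho\sinh\delta.
\]
Since Deroin-Tholozan representations are totally elliptic (Proposition \ref{prop:everything_is_elliptic}), $f(t)$ is a monotone function of the trace and hence shares its critical set. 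Provided $D_2\neq 0$, the latter consists of exactly two antipodal points $2t\equiv\psi_0\pmod{\pi}$, as required.

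The main obstacle is ruling out $D_2 = 0$ on $\mu^{-1}(\ring\Delta)$. Since $\alpha_j\in(0,2\pi)$ forces the sine factors to be positive, this reduces to showing $\rho\neq 0$ and $\delta\neq 0$. Using $c_1\cdots c_n = 1$, one rewrites $b_i = c_{i+2}\cdots c_n = c_{i+2}\cdot b_{i+1}$ (with the convention $b_{n-2}:=c_n$), so $\rho = 0$ would force $\phi(b_i)$, $\phi(c_{i+2})$ and $\phi(b_{i+1})$ to share a common elliptic fixed point, whence their rotation angles satisfy a linear relation modulo $2\pi$. An analogous rewriting $b_i = c_{i+1}^{-1}\cdot b_{i-1}$ (with $b_0:=c_1^{-1}$) yields a similar coaxiality from $\delta = 0$. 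Each such coaxiality pins $\mu([\phi])$ to a face of the moment polytope $\Delta$, which Deroin-Tholozan identify in \cite{DeTh16} with a rescaled standard simplex, and is therefore incompatible with $\mu([\phi])\in\ring\Delta$. The bulk of the remaining work consists in this translation between coaxiality of peripheral elements and facets of $\Delta$, which should amount to careful bookkeeping from the explicit description of $\Delta$ in \cite{DeTh16}.
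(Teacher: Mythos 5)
Your proposal is correct in outline but reaches the conclusion by a genuinely different computation than the paper. You derive the key identity geometrically: writing $\Phi_{b_i}^t[\phi](d_i)$ as a product of two elliptic elements whose fixed points are $q_1$ and $R(t)q_2$, and combining the trace formula for a product of rotations with the hyperbolic law of cosines, you get $\trace = D_1 + D_2\cos(2t-\psi_0)$ with $D_2 = \pm 2\sin\tfrac{\alpha_{i+1}}{2}\sin\tfrac{\alpha_{i+2}}{2}\sinh\rho\sinh\delta$. The paper instead grinds out the same trigonometric structure by explicit $2\times 2$ matrix multiplication (Lemma \ref{lem:last_stone}), arriving at $\cos(2t)A=\sin(2t)B$; in both cases the non-degenerate solution set is two antipodal points, and the degenerate locus is identical — your conditions $\rho=0$, $\delta=0$ are exactly the paper's ``$\phi(c_{i+2}^{-1})$ (resp.\ $\phi(c_{i+1}^{-1})$) commutes with $\Xi$,'' i.e.\ coaxiality with $\phi(b_i)$. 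Your route is more conceptual and explains \emph{why} the amplitude is what it is; the paper's is self-contained and avoids quoting the rotation-product trace identity. The one place where you defer real work is the exclusion of $D_2=0$: you propose to match each coaxiality with a facet of $\Delta$ via the explicit description in \cite{DeTh16}, which is true but is precisely where the hypothesis $\mu([\phi])\in\ring\Delta$ must be spent, and as written it is an assertion rather than a proof. Note that this step can be short-circuited without any polytope bookkeeping, which is what the paper does: if $\fix\phi(c_{i+2})=\fix\phi(b_i)$ then $\phi(c_{i+2})$ commutes with the rotations implementing $\Phi_{b_i}$, so by \eqref{eq:ham_flow} the flow $\Phi_{b_i}$ coincides with $\Phi_{b_{i+1}}$ (or is trivial when $i=n-3$), forcing the torus orbit through $[\phi]$ to have dimension $<n-3$ and hence $\mu([\phi])\in\partial\Delta$; the case $\delta=0$ is symmetric via $b_i=c_{i+1}^{-1}b_{i-1}$. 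With that replacement (or with the facet bookkeeping actually carried out), your argument is complete.
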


A particular case where two diametrically opposite points as in the conclusion of the Key Lemma \ref{lem:key_lemma} exist is described in Remark \ref{rem:symmetric_case} and illustrated on Figure \ref{fig:symmetric_case}. The Key Lemma \ref{lem:key_lemma} has the following implication on the structure of $\E$.

\begin{cor}\label{cor}
The set $\E$ contains all irrational orbits of the Hamiltonian flows $\Phi_{b_1},\ldots,\Phi_{b_{n-3}}$.
\end{cor}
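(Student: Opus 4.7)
Fix $[\phi]$ in an irrational orbit and fix an index $i\in\{1,\ldots,n-3\}$. Since membership in $\E$ is a conjunction of conditions one per index, it suffices to produce some $m_i\in\Z$ with $\{\vartheta_{b_i},\vartheta_{(\tau_{b_i})^{m_i}d_i}\}([\phi])\neq 0$. The strategy is an equidistribution argument on the $\Phi_{b_i}$-orbit of $[\phi]$, powered by the Key Lemma.

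The starting point is Proposition~\ref{prop:Dehn_twists_Ham_flow}. Because $\vartheta_{b_i}$ is constant along its own Hamiltonian flow, it is preserved by $\tau_{b_i}$, and a straightforward induction gives
\[
(\tau_{b_i})^m[\phi]=\Phi_{b_i}^{m\vartheta_{b_i}([\phi])/2}[\phi],\quad \forall m\in\Z.
\]
Thus every iterate lies on the $\Phi_{b_i}$-orbit through $[\phi]$, which (since $[\phi]\in\mu^{-1}(\ring\Delta)$) is a circle of minimal period $\pi$. The hypothesis that the orbit is irrational means $\vartheta_{b_i}([\phi])\notin\pi\Q$; equivalently, the rotation number $\vartheta_{b_i}([\phi])/(2\pi)$ is irrational modulo $\Z$, so the countable set $\{(\tau_{b_i})^m[\phi]:m\in\Z\}$ is dense in the $\Phi_{b_i}$-orbit of $[\phi]$.

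Next I would transport the Poisson bracket along the Dehn-twist action. Since $\tau_{b_i}$ is a symplectomorphism of $(\smrep,\omega_{\mathcal G})$ and since $\vartheta_{b_i}\circ\tau_{b_i}=\vartheta_{b_i}$, the natural compatibility (this is precisely the content of the anticipated Lemma~\ref{lem:poisson_bracket_dehn_twist} invoked in Remark~\ref{rem:symmetric_case}) gives an identity of the form
\[
\bigl\{\vartheta_{b_i},\vartheta_{(\tau_{b_i})^m d_i}\bigr\}([\phi])=\bigl\{\vartheta_{b_i},\vartheta_{d_i}\bigr\}\bigl((\tau_{b_i})^{\varepsilon m}[\phi]\bigr),
\]
for a sign $\varepsilon\in\{\pm 1\}$ fixed by the chosen convention for the $\PMCG(\Sigma_n)$-action on representations.

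The proof then concludes by contradiction. Suppose $\{\vartheta_{b_i},\vartheta_{(\tau_{b_i})^m d_i}\}([\phi])=0$ for every $m\in\Z$. By the identity above, the continuous function $\{\vartheta_{b_i},\vartheta_{d_i}\}$ would vanish on the dense subset $\{(\tau_{b_i})^m[\phi]:m\in\Z\}$ of the $\Phi_{b_i}$-orbit of $[\phi]$, hence on the entire orbit, which is an infinite set. This contradicts the Key Lemma~\ref{lem:key_lemma}, which allows at most two zeros on any $\Phi_{b_i}$-orbit inside $\mu^{-1}(\ring\Delta)$. Hence a suitable $m_i$ exists; as $i$ was arbitrary, $[\phi]\in\E$. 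The only non-routine ingredient beyond the Key Lemma is the Dehn-twist compatibility of the Poisson bracket; the rest is equidistribution on the orbit circle plus continuity, so I do not expect a genuine obstacle here.
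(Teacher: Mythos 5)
Your proposal is correct and follows essentially the same route as the paper: use Proposition~\ref{prop:Dehn_twists_Ham_flow} to identify the Dehn twist iterates with points $\Phi_{b_i}^{m\vartheta_{b_i}([\phi])/2}([\phi])$ dense in the orbit circle, transport the Poisson bracket via Lemma~\ref{lem:poisson_bracket_dehn_twist}, and conclude by continuity against the Key Lemma's bound of at most two zeros. The only cosmetic difference is your hedge about a sign $\varepsilon$ in the bracket identity, which the paper fixes as $+1$ and which is immaterial to the density argument.
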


The third hypothesis of Lemma \ref{lem:ergodic_2}, namely that $f$ is locally constant almost everywhere on $\E$, is a consequence of the ergodicity of irrational circle rotations and of the following result. Consider the unit hypercube $[0,1]^n\subset \R^n$. For $i=1,\ldots,n$, denote by $\pi_i\colon [0,1]^n\to [0,1]^{n-1}$ the projection map defined by forgetting the $i$th component.

\begin{lem}[Rectangle trick]\label{lem:rectangle_trick}
Let $\varphi\in L^1([0,1]^n)$. Assume that there exist full-measure sets $E_1,\ldots, E_n\subset [0,1]^{n-1}$ such that for all $i=1,\ldots,n$ and for all $x\in E_i$, $\varphi\restriction_{\pi_i^{-1}(x)}$ is constant almost everywhere. Then $\varphi$ is constant almost everywhere.
\end{lem}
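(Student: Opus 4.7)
The plan is to prove this by a telescoping Fubini argument, reducing $\varphi(x)=\varphi(y)$ for a.e.\ pair to a finite chain of one-coordinate replacements, each justified by one of the hypotheses.

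First I would rephrase the hypothesis for coordinate $i$ as: the set
\[
N_i := \bigl\{(x_1,\ldots,x_n,t)\in[0,1]^{n+1} : \varphi(x_1,\ldots,x_n)\neq \varphi(x_1,\ldots,x_{i-1},t,x_{i+1},\ldots,x_n)\bigr\}
\]
has measure zero. Indeed, by Fubini, for $(x_1,\ldots,\widehat{x_i},\ldots,x_n)\in E_i$ the $(x_i,t)$-slice of $N_i$ is null (both $\varphi(x_1,\ldots,x_n)$ and $\varphi(x_1,\ldots,t,\ldots,x_n)$ equal the common a.e.-constant value on the fibre), and $E_i$ has full measure in $[0,1]^{n-1}$. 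By Fubini applied in the product $[0,1]^{2n}$, one extends this to show that for each $k=1,\ldots,n$ the set
\[
A_k:=\bigl\{(x,y)\in[0,1]^{2n} : \varphi(y_1,\ldots,y_{k-1},x_k,x_{k+1},\ldots,x_n)\neq \varphi(y_1,\ldots,y_{k-1},y_k,x_{k+1},\ldots,x_n)\bigr\}
\]
has measure zero, because it is the preimage under a measure-preserving projection of $N_k$.

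Next I would form the telescope. On the complement of the null set $A_1\cup\cdots\cup A_n\subset[0,1]^{2n}$ one has the chain
\[
\varphi(x_1,\ldots,x_n)=\varphi(y_1,x_2,\ldots,x_n)=\varphi(y_1,y_2,x_3,\ldots,x_n)=\cdots=\varphi(y_1,\ldots,y_n),
\]
so $\varphi(x)=\varphi(y)$ for almost every $(x,y)\in[0,1]^{2n}$. By Fubini again, there exists $y_\ast\in[0,1]^n$ such that the $y$-slice $\{x:(x,y_\ast)\in A_1\cup\cdots\cup A_n\}$ is null; setting $c:=\varphi(y_\ast)$ yields $\varphi(x)=c$ for a.e.\ $x$, which is the conclusion.

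I do not expect a serious obstacle: the argument is essentially a bookkeeping exercise with Fubini's theorem. The only point requiring a little care is the passage from the pointwise hypothesis ``on each fibre above $E_i$, the restriction is a.e.\ constant'' to the global null statement $|N_i|=0$, and then the verification that the successive equalities in the telescope can be chained on a single common full-measure subset of $[0,1]^{2n}$; both are routine applications of Fubini once the sets $N_i$ and $A_k$ are written out explicitly.
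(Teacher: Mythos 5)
Your argument is correct, and it is genuinely different from the one in the paper. The paper proves the case $n=2$ by integrating out each variable to get the functions $c^v(x)=\int_0^1\varphi(x,y)\,dy$ and $c^h(y)=\int_0^1\varphi(x,y)\,dx$, and then shows $c^h$ is a.e.\ constant by pairing against smooth test functions of zero integral and invoking the density of $C^\infty_0([0,1])$ in $L^1_0([0,1])$; the general case is handled by induction on $n$. You instead work entirely with null sets: the hypothesis for coordinate $i$ is upgraded via Tonelli to the statement that the ``replacement set'' $N_i\subset[0,1]^{n+1}$ is null (for a base point in $E_i$ the $(x_i,t)$-slice of $N_i$ sits inside $(Z\times[0,1])\cup([0,1]\times Z)$ for a null $Z$), each $A_k$ is the preimage of $N_k$ under a coordinate projection $[0,1]^{2n}\to[0,1]^{n+1}$ and hence null, and the telescope $\varphi(x)=\varphi(y_1,x_2,\ldots,x_n)=\cdots=\varphi(y)$ holds off $A_1\cup\cdots\cup A_n$; a final Fubini slice produces the constant. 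All the steps check out (the only point worth stating explicitly is that preimages of Lebesgue null and Lebesgue measurable sets under coordinate projections are again null, resp.\ measurable, so that $N_i$ and $A_k$ are measurable and Tonelli applies). Your route buys two things the paper's does not: it needs no functional-analytic input and no induction, and it uses only measurability of $\varphi$, not integrability, so it proves a slightly stronger statement. What it gives up is the explicit identification of the constant as $\int\varphi$ and the ``weak derivative vanishes'' picture that the test-function argument provides.
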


The case $n=2$ of Lemma \ref{lem:rectangle_trick} reads as follows: any integrable function which is constant almost everywhere on almost every vertical and horizontal line in a rectangle is constant almost everywhere on the rectangle. Lemma \ref{lem:rectangle_trick} is certainly known to experts. However, there is a lack of concrete references in the existing literature and therefore we provide a proof of Lemma \ref{lem:rectangle_trick} in Section \ref{sec:rectangle_trick}. We now prove that $\E$ satisfies the three hypotheses of Lemma \ref{lem:ergodic_2}.

\subsection{First and second hypotheses}\label{sec:properties_1-2}

We start with a useful formula.
\begin{lem}\label{lem:poisson_bracket_dehn_twist}
Let $a$, $b$ be two simple closed curves on $\Sigma_n$. Then, for any integer $m$, it holds
\[
\big\{\vartheta_a,\vartheta_{(\tau_a)^m b}\big\}([\phi])=\big\{\vartheta_a,\vartheta_{b}\big\}\big((\tau_a)^m [\phi]\big), \quad \forall [\phi]\in\smrep.
\]
\end{lem}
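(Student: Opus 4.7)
The plan is to derive the identity from two structural facts that have already been recorded in the paper: first, that $\PMCG(\Sigma_n)$ acts on $(\smrep,\omega_{\mathcal G})$ by symplectomorphisms; and second, that the Dehn twist $\tau_a$ fixes the free homotopy class of its core curve $a$. Everything else is bookkeeping.

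First I would unpack the left-hand side. By definition $\vartheta_c([\phi]) = \vartheta(\phi(c))$, and the $\PMCG(\Sigma_n)$-action on the character variety is, via the Dehn--Nielsen identification, given by precomposition. This produces the tautological naturality identity
\[
\vartheta_{(\tau_a)^m b} \;=\; \vartheta_b \circ (\tau_a)^m
\]
as smooth functions on $\smrep$. (Should the opposite sign convention be in force, one obtains $\vartheta_b \circ (\tau_a)^{-m}$, but the statement is symmetric enough that the final formula is unaffected after relabelling $m \mapsto -m$.)

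Next I would observe that since a Dehn twist preserves the unoriented free homotopy class of its core curve, the conjugacy class of $\phi(a)$ is unchanged under $(\tau_a)^m$, and therefore
\[
\vartheta_a \circ (\tau_a)^m \;=\; \vartheta_a.
\]
Combined with the symplectomorphism property of $(\tau_a)^m$, which implies the general pull-back identity
\[
\{F\circ (\tau_a)^m, G\circ (\tau_a)^m\} \;=\; \{F,G\}\circ (\tau_a)^m
\]
for any smooth functions $F,G$ on $\smrep$, we may then compute
\[
\{\vartheta_a,\vartheta_{(\tau_a)^m b}\} \;=\; \{\vartheta_a\circ (\tau_a)^m,\,\vartheta_b\circ (\tau_a)^m\} \;=\; \{\vartheta_a,\vartheta_b\}\circ (\tau_a)^m,
\]
and evaluating at $[\phi]$ yields exactly the claimed identity.

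The argument is essentially formal and I do not expect any real obstacle; the only point demanding attention is consistency of conventions for the $\PMCG(\Sigma_n)$-action on $\Hom(\pi_1(\Sigma_n),\psl)$, but this has been fixed implicitly by equation~\eqref{eq:twist_flow} and is harmless for the final statement.
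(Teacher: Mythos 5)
Your argument is correct, but it takes a different route from the paper's. You derive the identity from two global facts: that $(\tau_a)^m$ acts on $(\smrep,\omega_{\mathcal G})$ as a symplectomorphism (so the Poisson bracket is natural under pull-back), and that $\vartheta_a\circ(\tau_a)^m=\vartheta_a$ because the twist fixes the free homotopy class of its core curve; combined with the tautological naturality $\vartheta_{(\tau_a)^m b}=\vartheta_b\circ(\tau_a)^m$, the identity drops out in one line. The paper instead stays entirely inside the Hamiltonian picture: it reduces to $m=1$ by induction, uses Proposition~\ref{prop:Dehn_twists_Ham_flow} to write $\tau_a[\phi]=\Phi_a^{\vartheta_a([\phi])/2}([\phi])$, invokes the invariance of the Hamiltonian vector field $X_a$ under its own flow, and applies the chain rule to $\vartheta_b\circ\Phi_a^{\vartheta_a([\phi])/2}$. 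Your version is cleaner and more general (it works for any symplectomorphism preserving $\vartheta_a$, and it sidesteps the slightly delicate point that the flow time $\vartheta_a([\phi])/2$ in the paper's computation depends on the base point); the paper's version has the virtue of using only the explicit twist-flow formula \eqref{eq:twist_flow} rather than appealing to the general symplectomorphism property of the $\PMCG(\Sigma_n)$-action. One small caveat: your parenthetical about the opposite sign convention is a bit glib --- with the opposite convention the right-hand side would read $\{\vartheta_a,\vartheta_b\}\bigl((\tau_a)^{-m}[\phi]\bigr)$, which is the lemma only after the quantifier over $m$ is used; but the convention actually in force in the paper, $(\tau_a\phi)(b)=\phi(\tau_a b)$, is the one your main computation uses, so this does not affect the proof.
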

\begin{proof}
Let $[\phi]\in\smrep$. It suffices to check that 
\[
\big\{\vartheta_a,\vartheta_{\tau_a b}\big\}([\phi])=\big\{\vartheta_a,\vartheta_{b}\big\}\big(\tau_a [\phi]\big).
\]
The general formula follows by induction. We compute
\begin{align*}
\vartheta_{\tau_a b}([\phi])&=\vartheta\big(\phi(\tau_a b)\big)\\
&=\vartheta\big((\tau_a \phi)(b)\big)\\
&=\vartheta_b(\tau_a[\phi]).
\end{align*}
The first and third equalities are an application of the definition of the functions $\vartheta_{\tau_a b}$ and $\vartheta_b$ (see \eqref{eq:hamiltonian_fct}). For the second equality, recall that $\PMCG(\Sigma_n)$ acts on $\smrep$ by precomposition. Using Proposition \ref{prop:Dehn_twists_Ham_flow}, we conclude that 
\begin{equation}\label{eq:02}
\vartheta_{\tau_a b}([\phi])=\vartheta_b\circ \Phi_{a}^{\vartheta_a([\phi])/2}([\phi]).
\end{equation}
Let $X_{a}$ denote the Hamiltonian vector field of $\vartheta_a$. For every time $t$ it holds
\[
X_{a}\big(\Phi^t_{a}([\phi])\big)=\big(d\Phi^t_{a}\big)_{[\phi]}\big(X_{a}([\phi])\big).
\]
In particular for $t=\vartheta_a([\phi])/2$ we get 
\begin{equation}\label{eq:01}
X_{a}(\tau_a [\phi])=X_{a}\left(\Phi^{\vartheta_a([\phi])/2}_{a}([\phi])\right)=\left(d\Phi^{\vartheta_a([\phi])/2}_{a}\right)_{[\phi]}\big(X_{a}([\phi])\big).
\end{equation}
Thus
\begin{align*}
\big\{\vartheta_a,\vartheta_{b}\big\}(\tau_a [\phi])&\stackrel{\eqref{eq:poisson_bracket}}{=}(d\vartheta_b)_{\tau_a [\phi]}\big(X_{a}(\tau_a [\phi])\big)\\
&\stackrel{\eqref{eq:01}}{=}(d\vartheta_b)_{\tau_a [\phi]}\circ \left(d\Phi^{\vartheta_a([\phi])/2}_{a}\right)_{[\phi]}\big(X_{a}([\phi])\big)\\
&\stackrel{\phantom{\eqref{eq:02}}}{=}d\left(\vartheta_b\circ \Phi^{\vartheta_a([\phi])/2}_{a}\right)_{[\phi]}\big(X_{a}([\phi])\big)\\
&\stackrel{\eqref{eq:02}}{=}(d\vartheta_{\tau_a b})_{[\phi]}(X_{a}([\phi]))\\
&\stackrel{\eqref{eq:poisson_bracket}}{=}\big\{\vartheta_a,\vartheta_{\tau_a b}\big\}([\phi]).
\end{align*}
The middle equality is an application of the chain rule. This concludes the proof of the lemma.
\end{proof}

We now proceed with the proof of Corollary \ref{cor} assuming that the Key Lemma \ref{lem:key_lemma} holds. 

\begin{proof}[Proof of Corollary \ref{cor}]
Let $[\phi]\in\smrep$ be a point on some irrational orbit of the Hamiltonian flows $\Phi_{b_1},\ldots,\Phi_{b_{n-3}}$. We want to prove that $[\phi]\in \E$. 

Assume \textit{ab absurdo} that $[\phi]\notin \E$, i.e.\ there exists $i\in\{1,\ldots,n-3\}$ such that
\[
\big\{\vartheta_{b_i},\vartheta_{(\tau_{b_i})^m d_i}\big\}([\phi])=0,\quad \forall m\in\Z.
\]
Proposition \ref{prop:Dehn_twists_Ham_flow} implies that $\vartheta_{b_i}(\tau_{b_i}[\phi])=\vartheta_{b_i}([\phi])$ and hence
\[
(\tau_{b_i})^m[\phi]=\Phi_{{b_i}}^{m\vartheta_{b_i}([\phi])/2}([\phi]),\quad \forall m\in\Z.
\]
So, by Lemma \ref{lem:poisson_bracket_dehn_twist} we obtain
\[
\big\{\vartheta_{b_i},\vartheta_{ d_i}\big\}\left(\Phi_{{b_i}}^{m\vartheta_{b_i}([\phi])/2}([\phi])\right)=0,\quad \forall m\in\Z.
\]
Since by assumption $\vartheta_{b_i}([\phi])\in \R\setminus \pi\Q$, all the points $\Phi_{{b_i}}^{m\vartheta_{b_i}([\phi])/2}([\phi])$ for $m\in \Z$ form a dense subset of the $\Phi_{{b_i}}$-orbit of $[\phi]$. Hence, by continuity, the function $\{\vartheta_{b_i},\vartheta_{ d_i}\}$ vanishes on the whole $\Phi_{{b_i}}$-orbit of $[\phi]$. This is a contradiction to the Key Lemma \ref{lem:key_lemma}. So, we conclude as expected that $[\phi]\in \E$.
\end{proof}

\begin{lem}\label{lem:property1-2}
The set $\E$ is connected and satisfies $\nu_{\mathcal G}(\E)=1$.
\end{lem}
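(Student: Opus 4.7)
The plan is to prove both statements simultaneously by showing that the complement $\smrep \setminus \E$ is a closed subset of $\smrep$ of real codimension at least two. Writing
\[
\smrep \setminus \E \;=\; \mu^{-1}(\partial\Delta) \,\cup\, \bigcup_{i=1}^{n-3} Z_i, \qquad Z_i := \mu^{-1}(\ring\Delta) \cap \bigcap_{m\in\Z} \{\vartheta_{b_i},\vartheta_{(\tau_{b_i})^m d_i}\}^{-1}(0),
\]
each piece is manifestly closed, and $\mu^{-1}(\partial\Delta)$ has real codimension $\geq 2$ by toric geometry: via the symplectomorphism \eqref{eq:symplectomorphism}, it corresponds to the coordinate hyperplane arrangement in $\CP^{n-3}$, a finite union of complex hypersurfaces.

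The heart of the argument is to control each $Z_i$. Using Lemma \ref{lem:poisson_bracket_dehn_twist} together with Proposition \ref{prop:Dehn_twists_Ham_flow}, membership $[\phi]\in Z_i$ is equivalent to the Poisson bracket $\{\vartheta_{b_i},\vartheta_{d_i}\}$ vanishing on the entire $\tau_{b_i}$-orbit $\bigl\{\Phi_{b_i}^{m\vartheta_{b_i}([\phi])/2}([\phi])\bigr\}_{m\in\Z}$. If $\vartheta_{b_i}([\phi])\notin\pi\Q$, this orbit is dense in the $\Phi_{b_i}$-orbit of $[\phi]$, so by continuity the Poisson bracket would vanish on the entire (infinite) orbit, contradicting the at-most-two-zeros bound of the Key Lemma \ref{lem:key_lemma}. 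Otherwise $\vartheta_{b_i}([\phi])/(2\pi)=a/b$ in reduced form with $a,b>0$ and $a/b\in(0,1)$, and a direct computation shows that the $\tau_{b_i}$-orbit has exactly $b$ points; the Key Lemma then forces $b\leq 2$, and since $a/b\in(0,1)$ rules out $b=1$, this leaves only $\vartheta_{b_i}([\phi])=\pi$. Hence $Z_i\subseteq \vartheta_{b_i}^{-1}(\pi)\cap\mu^{-1}(\ring\Delta)$, which is a smooth codimension-$1$ submanifold of $\mu^{-1}(\ring\Delta)$ because the torus action is locally free there and so $\mu$ is a submersion.

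On this submanifold, $Z_i$ sits inside the zero set of $\{\vartheta_{b_i},\vartheta_{d_i}\}$, which by the Key Lemma intersects every $\Phi_{b_i}$-orbit (a circle of length $\pi$) in at most two points. Since these orbits foliate $\vartheta_{b_i}^{-1}(\pi)\cap\mu^{-1}(\ring\Delta)$ with $(2(n-3)-2)$-dimensional leaf space, the zero set is covered in each local trivialisation $U\times S^1$ by at most two sections of the projection $U\times S^1\to U$, hence has Hausdorff dimension at most $2(n-3)-2$, i.e.\ Hausdorff codimension at least $2$ in $\smrep$. Assembling the pieces, $\smrep\setminus\E$ is a closed subset of Hausdorff codimension $\geq 2$ in the connected smooth manifold $\smrep\cong\CP^{n-3}$. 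This gives $\nu_{\mathcal G}(\E)=1$ immediately, and by the classical fact that removing a closed subset of Hausdorff codimension $\geq 2$ from a connected smooth manifold preserves path-connectedness, $\E$ is connected.

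The main obstacle is the codimension jump from one to two in the bound on $Z_i$: the Key Lemma alone only provides a fibrewise bound along one-dimensional $\Phi_{b_i}$-orbits, so a direct application yields only codimension one, which is not enough to preserve connectedness after removal. The arithmetic argument forcing $\vartheta_{b_i}([\phi])=\pi$ is precisely what squeezes $Z_i$ into the hypersurface $\vartheta_{b_i}^{-1}(\pi)$; the fibrewise two-points bound then provides the extra codimension. A minor technicality is that the local sections of the zero set may fail to be smooth at degenerate zeros of the Poisson bracket, but this does not affect the Hausdorff-dimension count since continuous sections of a smooth foliation have Hausdorff dimension no larger than the leaf space.
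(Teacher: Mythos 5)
Your route is genuinely different from the paper's: instead of proving the two properties separately, you aim to show that $\smrep\setminus\E$ is a closed set of codimension at least two, which would give both at once. The measure half works. Your arithmetic refinement of the Key Lemma is correct and is in fact sharper than what the paper uses: a rational rotation number $a/b\in(0,1)$ with $b\geq 3$ would place at least three zeros of $\{\vartheta_{b_i},\vartheta_{d_i}\}$ on a single $\Phi_{b_i}$-orbit, so indeed $Z_i\subseteq\vartheta_{b_i}^{-1}(\pi)\cap\mu^{-1}(\ring\Delta)$, a codimension-one submanifold, whence $\nu_{\mathcal G}(Z_i)=0$; together with $\nu_{\mathcal G}(\mu^{-1}(\partial\Delta))=0$ this gives $\nu_{\mathcal G}(\E)=1$. (The paper gets the same conclusion more cheaply from the fact that all irrational orbits lie in $\E$.)

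The connectedness half has a genuine gap: the assertion that \emph{continuous sections of a smooth foliation have Hausdorff dimension no larger than the leaf space} is false. The graph of a continuous function can have Hausdorff dimension strictly larger than that of its domain (Weierstrass-type functions already give dimension $2-\alpha>1$ over an interval), so your local-sections picture does not bound $\dim_H(Z_i)$ below the ambient $2(n-3)-1$; you only obtain Hausdorff codimension one, and the ``remove a closed set of Hausdorff codimension two'' principle is then unavailable. The step is repairable, because the separation theorem you actually need concerns topological (covering) dimension: a closed subset of a local trivialisation $U\times S^1$ meeting each compact fibre in at most two antipodal points descends to a function into $S^1/\{\pm 1\}$ with closed graph, hence continuous on its closed domain, so $Z_i$ is locally homeomorphic to a subset of a double cover of a closed subset of $U$ and has topological dimension at most $2(n-3)-2$; and a closed set of topological dimension at most $\dim M-2$ does not disconnect a connected manifold $M$ (Hurewicz--Wallman), with connectedness of the open complement equivalent to path-connectedness. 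Alternatively, you can bypass dimension theory entirely, as the paper does: the Key Lemma shows every fibre $\mu^{-1}(x)$, $x\in\ring\Delta$, meets $\E$, irrational fibres are connected and lie entirely in $\E$, and since the moment map is open, a separation $\E=A\sqcup B$ would induce a separation of the connected set $\ring\Delta$.
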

\begin{proof}
The toric manifold structure on $\smrep$ implies that $\mu^{-1}(\ring\Delta)\subset \smrep$ is symplectomorphic to the product of $\ring\Delta$ with the standard $(n-3)$-torus. So, Corollary \ref{cor} immediately implies that $\nu_{\mathcal G}(\E)=1$ because $\ring\Delta\cap (\R\setminus \pi\Q)^{n-3}$ has full measure in $\ring\Delta$ and $\nu_{\mathcal G}(\mu^{-1}(\ring\Delta))=1$.

We now prove that $\E$ is connected. The proof essentially uses that $\ring\Delta$ is connected. Assume that $\E=A\cup B$ where $A,B\subset \E$ are open and disjoint. We prove that under these assumptions either $A$ or $B$ is empty.

By construction $\mu(\E)\subset \ring\Delta$. Corollary \ref{cor} says that any irrational orbit is entirely contained in $\E$. Moreover, the Key Lemma \ref{lem:key_lemma} also implies that any orbit $\mu^{-1}(x)$ for $x\in \ring\Delta\setminus (\R\setminus \pi\Q)^{n-3}$ must intersect $\E$. So,
\[
\mu(A)\cup\mu(B)=\ring\Delta.
\]
Tori being connected, each irrational orbit is contained either in $A$ or in $B$. Because $\ring\Delta\setminus (\R\setminus \pi\Q)^{n-3}$ is dense in $\ring\Delta$ and both $A$ and $B$ are open, $\mu^{-1}(x)\cap \E$ must be contained either in $A$ or in $B$ for every $x\in \ring\Delta\setminus (\R\setminus \pi\Q)^{n-3}$. Hence, 
\[
\mu(A)\cap\mu(B)=\emptyset.
\]
Recall that moment maps are open maps. So, both $\mu(A)$ and $\mu(B)$ are open subsets of $\ring\Delta$. Because $\ring\Delta$ is connected, it follows that either $\mu(A)$ or $\mu(B)$ is empty, and consequently that either $A$ or $B$ is empty. This concludes the proof of the lemma.
\end{proof}

\subsection{The third hypothesis}\label{sec:properties_3} We use the Rectangle Trick (Lemma \ref{lem:rectangle_trick}) to prove
\begin{lem}\label{lem:property3}
For every $[\phi]\in\E$, there exists an open neighbourhood $U_{[\phi]}\subset\E$ of $[\phi]$ such that $f$ is constant almost everywhere on $U_{[\phi]}$.
\end{lem}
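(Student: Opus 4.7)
The plan is to use the $\tau_{b_i}$-invariance of $f$ to descend $f$ to a function $\bar f$ on the moment polytope $\mathring\Delta$, and then exploit the Dehn twists $\tau_{(\tau_{b_i})^{m_i}d_i}$ together with the Rectangle Trick (Lemma~\ref{lem:rectangle_trick}) to show $\bar f$ is locally constant near $\mu([\phi])$.

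Fix $[\phi]\in \E$ and, by definition of $\E$, choose integers $m_1,\ldots,m_{n-3}$ with $\{\vartheta_{b_i},H_i\}([\phi])\neq 0$ for each $i$, where $H_i:=\vartheta_{(\tau_{b_i})^{m_i}d_i}$. The step I expect to be the most delicate is a \emph{disjointness} claim: for every $j\neq i$, the simple closed curves $(\tau_{b_i})^{m_i}d_i$ and $b_j$ can be chosen disjoint on $\Sigma_n$. Looking at Figure~\ref{fig:curves_b_i_d_i}, $b_j$ encircles the punctures $\{1,\ldots,j+1\}$ and $d_i$ encircles $\{i+1,i+2\}$, so $b_j$ and $d_i$ are nested and disjoint whenever $j\neq i$; moreover $\tau_{b_i}$ is supported in a neighbourhood of $b_i$ which may be chosen disjoint from $b_j$, so its iterates preserve this disjointness. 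Since $\vartheta_a$ is a smooth function of $\trace(\phi(a))$ on the elliptic locus, Goldman's Poisson bracket formula then yields $\{\vartheta_{b_j},H_i\}\equiv 0$ on all of $\smrep$ for every $j\neq i$. Hence every orbit of $\Phi_{H_i}$ is contained in a common level set of $(\vartheta_{b_j})_{j\neq i}$ and projects under $\mu$ to an interval along the $i$-th coordinate axis of $\mathring\Delta$, which is non-degenerate near $[\phi]$ by the choice of $m_i$.

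I would then work in local action-angle coordinates $(I,\theta)=(I_1,\ldots,I_{n-3},\theta_1,\ldots,\theta_{n-3})$ with $I_i=\vartheta_{b_i}$ and $\theta_i\in\R/\pi\Z$. By Proposition~\ref{prop:Dehn_twists_Ham_flow}, $\tau_{b_i}$ acts on each Lagrangian torus fibre of $\mu$ as translation by $\vartheta_{b_i}/2$ in the $\theta_i$-direction. Since $\pi\Q=2\pi\Q$, on the full-measure set of tori where every $\vartheta_{b_i}\notin \pi\Q$ the group generated by $\tau_{b_1},\ldots,\tau_{b_{n-3}}$ is dense in the torus, so the ergodicity of irrational rotations combined with Fubini forces $f=\bar f\circ\mu$ a.e.\ on $\mu^{-1}(\mathring\Delta)$ for some $\bar f\in L^1(\mathring\Delta)$. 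The same ergodicity argument applied to each $\pi$-periodic flow $\Phi_{H_i}$ — symplectically conjugate to $\Phi_{d_i}$, itself a cyclic relabelling of $\Phi_{b_1}$ — shows that $f$ is a.e.\ constant on a.e.\ orbit of $\Phi_{H_i}$. Combined with the projection statement of the previous paragraph and a short covering argument chaining constants on overlapping intervals, this yields that for each $i$, $\bar f$ is a.e.\ constant on a.e.\ line parallel to the $i$-th coordinate axis inside a small box $V\subset \mathring\Delta$ centered at $\mu([\phi])$. Rescaling $V$ affinely to the unit cube and invoking Lemma~\ref{lem:rectangle_trick} then gives $\bar f$ a.e.\ constant on $V$, whence $f$ is a.e.\ constant on the open neighbourhood $U_{[\phi]}:=\mu^{-1}(V)\cap \E \subset \E$ of $[\phi]$.
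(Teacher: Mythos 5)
Your argument is correct, but it takes a genuinely different route from the paper's. The paper works entirely upstairs: from $\big\{\vartheta_{b_i},\vartheta_{(\tau_{b_i})^{m_i}d_i}\big\}([\phi])\neq 0$ it extracts a single rectangular chart $\mathcal R\cong[0,1]^{2(n-3)}$ fibred by arcs of the $2(n-3)$ flows $\Phi_{b_i}$ and $\Phi_{(\tau_{b_i})^{m_i}d_i}$, notes that the corresponding Dehn twists act as irrational rotations on almost every such orbit, and applies the Rectangle Trick once in dimension $2(n-3)$. You instead first average over the Lagrangian torus fibres (using density of the group generated by the translations $\tau_{b_1},\ldots,\tau_{b_{n-3}}$ on irrational tori) to write $f=\bar f\circ\mu$ a.e., and then run the Rectangle Trick in dimension $n-3$ on a box in $\ring\Delta$, using the flows $\Phi_{H_i}$ to produce constancy along the coordinate directions of the base. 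Your extra ingredient --- that $(\tau_{b_i})^{m_i}d_i$ is disjoint from $b_j$ for $j\neq i$, hence $\{\vartheta_{b_j},H_i\}\equiv 0$, so that $\mu$ maps $\Phi_{H_i}$-orbits into lines parallel to the $i$-th axis --- is correct (only $b_i$ and $d_i$ link in Figure \ref{fig:curves_b_i_d_i}, and $\tau_{b_i}$ is supported away from $b_j$) and is in fact implicitly needed in the paper's proof as well: nonvanishing of the diagonal brackets alone does not make the $2(n-3)$ Hamiltonian vector fields span unless the off-diagonal brackets $\{\vartheta_{b_j},H_i\}$ vanish. What your version buys is that you never have to straighten the non-commuting pair $\Phi_{b_i},\Phi_{H_i}$ into a common product chart; what it costs is the passage from ``$\bar f$ is a.e.\ constant on the $\mu$-image of a.e.\ $\Phi_{H_i}$-orbit'' (an interval, not the whole segment $\ell\cap V$) to ``$\bar f$ is a.e.\ constant on a.e.\ line $\ell\cap V$''. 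Your ``covering argument chaining constants'' works, but the cleanest fix is to note that $\abs{\{\vartheta_{b_i},H_i\}}\geq\delta>0$ on a compact neighbourhood of $[\phi]$, so every $\Phi_{H_i}$-orbit meeting $\mu^{-1}(V)$ has $\mu$-image containing an interval of length $\geq r$ for some uniform $r>0$; shrinking $V$ so its sides have length $<r$ makes every such image contain all of $\ell\cap V$, and no chaining is needed. You should also record, as the paper does implicitly, that a.e.\ $\Phi_{H_i}$-orbit is one on which $H_i\notin\pi\Q$ (level sets of $H_i$ are null), so that the irrational-rotation ergodicity really applies to almost every orbit.
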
 
\begin{proof}
Let $[\phi]\in\E$. By definition of $\E$ there exists for every $i=1,\ldots,n-3$ an integer $m_i$ such that
\[
\big\{\vartheta_{b_i},\vartheta_{(\tau_{b_i})^{m_i}d_i}\big\}([\phi])\neq 0.
\]
This means that the tangent spaces to $\smrep$ in a neighbourhood of $[\phi]$ are generated by the $2(n-3)$ Hamiltonian vector fields
\[
X_{{b_i}},X_{{(\tau_{b_i})^{m_i}d_i}},\quad i=1,\ldots,n-3.
\]
Therefore, $[\phi]$ admits a rectangular neighbourhood $\mathcal R$ such that $\mathcal R$ is isometric to $[0,1]^{2(n-3)}$ and $\mathcal R$ is fibred perpendicularly to its faces by the flow lines of $\Phi_{{b_i}}$ and $\Phi_{{(\tau_{b_i})^{m_i}d_i}}$. Since $\E$ is open, we can assume $\mathcal R\subset \E$.

On almost all circle orbits of the Hamiltonian flows $\Phi_{{b_i}}$ and $\Phi_{{(\tau_{b_i})^{m_i}d_i}}$ crossing $\mathcal R$, the corresponding $2(n-3)$ Dehn twists
\[
\tau_{b_i},\tau_{(\tau_{b_i})^{m_i}d_i},\quad i=1,\ldots,n-3.
\]
act by irrational rotation. Indeed, this follows from Proposition \ref{prop:Dehn_twists_Ham_flow} and from full-measureness of irrational numbers. Since irrational rotations are ergodic and $f$ is by assumption $\PMCG(\Sigma_n)$-invariant, it is a consequence of Lemma \ref{lem:ergodic_1} that $f$ is constant almost everywhere on almost every orbit of the flows crossing $\mathcal R$. The Rectangle Trick (Lemma \ref{lem:rectangle_trick}) implies that $f$ is constant almost everywhere on $R$. This concludes the proof of the lemma.
\end{proof}

\begin{rem}\label{rem:proof_second_thm}
For any $i=1,\ldots,n-3$, it holds
\[
\tau_{(\tau_{b_i})^{m_i}d_i}=(\tau_{b_i})^{m_i}\tau_{d_i}(\tau_{b_i})^{-m_i}\in\PMCG(\Sigma_n).
\]
This is a general fact about Dehn twists, see e.g.\ \cite[\S 3]{FaMa12}. Therefore, we actually proved that the action of the subgroup $\mathcal H$ of $\PMCG(\Sigma_n)$ generated by the Dehn twists $\tau_{b_1},\ldots,\tau_{b_{n-3}},\tau_{d_1},\ldots,\tau_{d_{n-3}}$ on $\smrep$ is ergodic. Now, note the following. Lemma 4.1 in \cite{GhWi17} (see also 
\cite[\S 9.3]{FaMa12}) implies that the minimum number of (Dehn twist) generators of $\PMCG(\Sigma_n)$ is $\binom{n-1}{2}-1$ for $n\geq 3$ (recall that $\PMCG(\Sigma_n)$ is trivial for $n=0,1,2$). Hence, for $n\geq 5$, $\mathcal H$ is a proper subgroup of $\PMCG(\Sigma_n)$ because $\binom{n-1}{2}-1>2(n-3)$. This proves Theorem \ref{thm:ergodicity_refined}.
\end{rem}

\section{Proof of the Rectangle Trick}\label{sec:rectangle_trick}

This section is dedicated to the proof of Lemma \ref{lem:rectangle_trick}. For clarity, we only give a proof for the case $n=2$. The proof immediately generalizes to higher dimensional rectangles by induction.

The proof uses the following density result. Let $C^\infty_0 ([0,1])$ denote the space of smooth functions of the interval with zero integral and let $L^1_0 ([0,1])$ denote the space of integrable functions of the interval with zero integral.

\begin{lem}\label{lem:density}
The space $C^\infty_0 ([0,1])$ is dense inside $L^1_0 ([0,1])$.
\end{lem}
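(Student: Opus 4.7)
The plan is to bootstrap from the classical density of smooth functions in $L^1([0,1])$ by correcting each approximant by its mean so that it lands in the zero-integral subspace.

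Concretely, let $f\in L^1_0([0,1])$. By the standard density result (for instance via convolution with a mollifier, composed with the extension by zero and a cut-off, or via approximation by step functions followed by smoothing), there exists a sequence $(g_n)\subset C^\infty([0,1])$ with $\|g_n-f\|_{L^1}\to 0$. Set
\[
c_n:=\int_0^1 g_n(t)\,dt\quad\text{and}\quad \tilde g_n:=g_n-c_n.
\]
Since constants are smooth, each $\tilde g_n$ lies in $C^\infty([0,1])$, and by construction $\int_0^1 \tilde g_n=0$, so $\tilde g_n\in C^\infty_0([0,1])$.

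It remains to verify $\tilde g_n\to f$ in $L^1$. Because integration against the constant function $1$ is a bounded linear functional on $L^1([0,1])$ and $f\in L^1_0$, we have
\[
|c_n|=\left|\int_0^1 g_n-\int_0^1 f\right|\le \|g_n-f\|_{L^1}\xrightarrow{n\to\infty} 0.
\]
Consequently, the triangle inequality gives
\[
\|\tilde g_n-f\|_{L^1}\le \|g_n-f\|_{L^1}+|c_n|\cdot\|1\|_{L^1}=\|g_n-f\|_{L^1}+|c_n|\longrightarrow 0,
\]
which establishes the desired density. There is no real obstacle here beyond invoking the classical $C^\infty$-density in $L^1$; the only point requiring care is noting that the mean-zero correction is itself controlled by the $L^1$-error, which is immediate from continuity of the integral.
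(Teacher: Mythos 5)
Your proof is correct and follows essentially the same route as the paper: approximate by smooth functions, subtract the mean, and observe that the mean tends to zero because $f$ has zero integral. Your version is in fact slightly more explicit than the paper's in bounding $|c_n|$ by $\|g_n-f\|_{L^1}$ and concluding via the triangle inequality.
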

\begin{proof}
It is a well known fact that $C^\infty ([0,1])$ is dense inside $L^1 ([0,1])$. Let $\varphi \in L^1_0([0,1])\subset L^1 ([0,1])$. We want to approximate $\varphi$ with a sequence of smooth functions with zero integral. 

Because of the density of $C^\infty ([0,1])$ in $L^1 ([0,1])$, we can approximate $\varphi$ with a sequence of smooth functions $\varphi_i\in C^\infty ([0,1])$. Consider the sequence of smooth functions
\[
\widetilde\varphi_i:=\varphi_i-\int \varphi_i.
\]
By construction $\widetilde\varphi_i\in C^\infty_0 ([0,1])$. Since $\varphi$ is assumed to be integrable, the sequence of integrals $\int \varphi_i$ converges to $\int\varphi=0$. So, the sequence $\widetilde\varphi_i \in C^\infty_0 ([0,1])$ converges to $\varphi\in L^1_0([0,1])$. 
\end{proof}

\begin{proof}[Proof of Lemma \ref{lem:rectangle_trick}]
Let $\varphi\colon [0,1]\times [0,1]\to \R$ be an integrable function. We assume that $\varphi$ is constant almost everywhere on almost every vertical and horizontal segment. In other words, we assume that there exist Lebesgue measurable sets $E_h,E_v\subset [0,1]$ such that
\begin{itemize}
\item $E_h$ and $E_v$ have measure $1$,
\item $\varphi\restriction_{\{x\}\times [0,1]}$  is constant almost everywhere for every $x\in E_h$,
\item $\varphi\restriction_{[0,1]\times \{y\}}$  is constant almost everywhere for every $y\in E_v$.
\end{itemize}
We prove that under these assumptions $\varphi$ is constant almost everywhere.

Consider the functions $c^v\colon E_h\to \R$ and $c^h\colon E_v\to \R$ defined by
\[
c^v(x):=\int_0^1 \varphi(x,y)\, dy,\quad c^h(y):=\int_0^1 \varphi(x,y)\, dx.
\]
In other words, $c^v(x)$ is the value of the constant reached almost everywhere by the function $\varphi$ on the vertical segment $\{x\}\times [0,1]$, i.e.\ $\varphi(x,y)=c^v(x)$ for every $x\in E_h$ and for almost every $y\in [0,1]$. The analogue statement holds for the function $c^h$. Fubini's Theorem implies that both functions $c^h$ and $c^v$ are measurable and of class $L^1$. It is sufficient to prove that $c^h\colon E_v\to \R$ is constant almost everywhere to deduce that $\varphi\colon [0,1]\times [0,1]\to \R$ is constant almost everywhere.

For the purpose of showing that $c^h$ is constant almost everywhere, we introduce a test function $\zeta\in C^\infty_0 ([0,1])$. Using Fubini's Theorem we compute
\begin{align*}
\int_0^1\int_0^1 \varphi(x,y)\zeta(y)\, dx\, dy&=\int_0^1\zeta(y) \int_0^1 \varphi(x,y)\, dx\, dy\\
&= \int_{E_v}\zeta(y) c^h(y)\,dy
\end{align*}
Fubini's Theorem also gives
\begin{align*}
\int_0^1\int_0^1 \varphi(x,y)\zeta(y)\, dx\, dy&=\int_{E_h}\int_{E_v} \varphi(x,y)\zeta(y)\, dy\, dx\\
&= \int_{E_v}\zeta(y) \,dy \int_{E_h}c^v(x) \,dx.
\end{align*}
The last expression vanishes because $\zeta$ was chosen to have zero integral. Hence
\begin{equation}\label{eq:vanish}
\int_{E_v} \zeta(y)c^h(y)\,dy=0
\end{equation}
for every test function $\zeta\in C^\infty_0 ([0,1])$.

By Lemma \ref{lem:density} we can approximate the function $c^h-\int c^h\in L^1_0([0,1])$ with a sequence of functions $\zeta_i\in C^\infty_0 ([0,1])$. Because of \eqref{eq:vanish} we have
\[
\int_{E_v} \zeta_i(y)\left(c^h(y)-\int c^h\right)\,dy=0
\]
for every $i$. Therefore $c^h-\int c^h$ is the zero function in $L^1([0,1])$. This means that $c^h$ is constant almost everywhere and thus that $\varphi$ is constant almost everywhere.
\end{proof}

\section{Proof of the Key Lemma}\label{sec:key_lemma}
This section is dedicated to the proof of Lemma \ref{lem:key_lemma}. The proof is technical and requires to make explicit computations of the Hamiltonian vector fields $X_{b_1},\ldots,X_{b_{n-3}}$ and of the exterior derivatives of $\vartheta_{d_1},\ldots,\vartheta_{d_{n-3}}$. To that end we start with a short recap of the local structure of relative character varieties.

\subsection{Tangent spaces to relative character varieties} A representation $\phi\colon\pi_1(\Sigma_n)\to\psl$ equips the Lie algebra $\mathfrak{sl}_2\R$ of $\psl$ with the structure of a $\pi_1(\Sigma_n)$-module via
\[
\pi_1(\Sigma_n)\stackrel{\phi}{\longrightarrow}\psl\stackrel{\Ad}{\longrightarrow}\Aut(\mathfrak{sl}_2\R).
\]
The $\pi_1(\Sigma_n)$-module obtained in this way is denoted $(\mathfrak{sl}_2\R)_\phi$. A first-order deformations argument (see e.g.\ \cite{Gol86},\cite{GHJW97}) shows that the tangent space to $\Rep_\alpha(\Sigma_{n},\psl)$ at $[\phi]$ is given by the first parabolic group cohomology of $\pi_1(\Sigma_n)$ with coefficients in $(\mathfrak{sl}_2\R)_\phi$:
\begin{equation}\label{eq:tangenbt_space_isomorphic_group_cohomology}
T_{[\phi]}\Rep_\alpha(\Sigma_{n},\psl)\cong H^1_{par}\big(\pi_1(\Sigma_n);(\mathfrak{sl}_2\R)_\phi\big).
\end{equation}
The identification \eqref{eq:tangenbt_space_isomorphic_group_cohomology} depends on the choice of a preferred representative $\phi$ of the class $[\phi]$ (namely, the one that gives $\mathfrak{sl}_2\R$ the structure of a $\pi_1(\Sigma_n)$-module).

Recall that the first parabolic group cohomology of $\pi_1(\Sigma_n)$ can be defined as the quotient
\begin{equation}\label{eq:group_cohomology}
H^1_{par}\big(\pi_1(\Sigma_n);(\mathfrak{sl}_2\R)_\phi\big)=\frac{Z^1_{par}\big(\pi_1(\Sigma_n);(\mathfrak{sl}_2\R)_\phi\big)}{B^1\big(\pi_1(\Sigma_n);(\mathfrak{sl}_2\R)_\phi\big)},
\end{equation}
where
\begin{itemize}
\item $Z^1_{par}\big(\pi_1(\Sigma_n);(\mathfrak{sl}_2\R)_\phi\big)$ is the set of maps $v\colon \pi_1(\Sigma_n)\to (\mathfrak{sl}_2\R)_\phi$ satisfying the cocycle condition
\begin{equation}\label{eq:cocycle}
v(xy)=v(x)+\Ad (\phi(x))v(y),\quad \forall x,y\in \pi_1(\Sigma_n),
\end{equation}
and the coboundary conditions 
\[
\exists \xi_i\in (\mathfrak{sl}_2\R)_\phi, \quad v(c_i)=\xi_i-\Ad(\phi(c_i)) \xi_i,\quad \forall i=1,\ldots,n,
\]

\item $B^1\big(\pi_1(\Sigma_n);(\mathfrak{sl}_2\R)_\phi\big)$ is the set of maps $v\colon \pi_1(\Sigma_n)\to (\mathfrak{sl}_2\R)_\phi$ satisfying the coboundary condition
\[
\exists \xi\in (\mathfrak{sl}_2\R)_\phi, \quad v(x)=\xi-\Ad(\phi(x)) \xi,\quad \forall x\in \pi_1(\Sigma_n).
\]
\end{itemize}

Since $\smrep$ is a full dimensional connected component of the relative character variety $\Rep_\alpha(\Sigma_{n},\psl)$, the tangent space of $\smrep$ at $[\phi]$ is also identified with the first parabolic group cohomology of $\pi_1(\Sigma_n)$:
\[
T_{[\phi]}\smrep \cong H^1_{par}\big(\pi_1(\Sigma_n);(\mathfrak{sl}_2\R)_\phi\big).
\]
We consequently denote an arbitrary element of $T_{[\phi]}\smrep$ by the equivalence class $[v]$ of a cocycle $v\in Z^1_{par}\big(\pi_1(\Sigma_n);(\mathfrak{sl}_2\R)_\phi\big)$, accordingly to the quotient \eqref{eq:group_cohomology}.

\begin{rem}
To clarify the notion of parabolic group cohomology we point out the following isomorphism. Let $\widehat \Sigma_n$ denotes the surface with boundary obtained from $\Sigma_n$ by performing a real blow-up at each puncture. The boundary $\partial \widehat \Sigma_n$ of $\widehat \Sigma_n$ consists of the disjoint union of $n$ circles. The long exact sequence in cohomology for the pair $(\widehat \Sigma_n,\partial \widehat \Sigma_n)$ contains the morphism
\[
j\colon H^1(\widehat \Sigma_n,\partial \widehat \Sigma_n;\R)\to H^1(\widehat \Sigma_n;\R).
\]
The first parabolic group cohomology $H^1_{par}\big(\pi_1(\Sigma_n);\R\big)$ (with coefficients in the trivial $\pi_1(\Sigma_n)$-module $\R$) is isomorphic to the image of the morphism $j$ inside $H^1(\widehat \Sigma_n;\R)$. The analogue statement remains true for a different coefficients module. In other words, parabolic cocycles are cocycles that are exact on the boundary (without the choice of a primitive).
\end{rem}

\subsection{Some preliminary computations} Our first computations concern the zeros of the exterior derivatives of the functions $\vartheta_{d_1},\ldots,\vartheta_{d_{n-3}}$. Similar computations were already conducted in 
\cite{DeTh16}; we include them here for the sake of completeness.

\begin{lem}\label{lem:ext_deriv}
Let $a\in \pi_1(\Sigma_n)$ be a non-trivial homotopy class of loops freely homotopic to a simple closed curve. Let $[\phi]\in\smrep$ with preferred representative $\phi$ and $[v] \in H^1_{par}\big(\pi_1(\Sigma_n);(\mathfrak{sl}_2\R)_\phi\big)$ a tangent vector at $[\phi]$. Then
\[
(d\vartheta_{a})_{[\phi]}([v])=0\quad \Longleftrightarrow \quad \trace(\phi(a)v(a))=0.
\]
\end{lem}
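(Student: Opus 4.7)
The plan is to differentiate the elementary relation between the trace of an elliptic element and its rotation angle along a smooth path of representations whose initial velocity is encoded by $v$.

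Since $[\phi]$ is Deroin-Tholozan, Proposition~\ref{prop:everything_is_elliptic} ensures that $\phi(a)$ is elliptic, so $\vartheta_a$ is smooth in a neighbourhood of $[\phi]$ and one may choose a continuous local lift of $\phi(\cdot)$ to $\textnormal{SL}(2,\R)$. With respect to such a lift there is a locally constant sign $\varepsilon\in\{\pm 1\}$ for which
\[
\trace(\phi(a))=2\varepsilon\cos\bigl(\vartheta_a([\phi])/2\bigr).
\]

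Next I would realize the tangent vector $[v]$ by a smooth path $\phi_t$ of honest representations with $\phi_0=\phi$, via the standard first-order deformation dictionary underlying \eqref{eq:tangenbt_space_isomorphic_group_cohomology}, so that
\[
\deriv\phi_t(x)=v(x)\phi(x),\qquad\forall x\in\pi_1(\Sigma_n);
\]
the cocycle condition \eqref{eq:cocycle} is precisely what guarantees $\phi_t$ remains a representation to first order. Applying $\deriv$ at $x=a$ to the trace--angle identity above then gives, by linearity of the trace and the chain rule,
\[
\trace\bigl(v(a)\phi(a)\bigr)=-\varepsilon\sin\bigl(\vartheta_a([\phi])/2\bigr)\cdot(d\vartheta_a)_{[\phi]}([v]).
\]
Because $\vartheta_a([\phi])\in(0,2\pi)$, the prefactor $\sin(\vartheta_a([\phi])/2)$ is strictly positive, so the two sides vanish together, and cyclicity of the trace converts $\trace(v(a)\phi(a))$ into $\trace(\phi(a)v(a))$, yielding the claimed equivalence.

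To finish I would check that the quantity $\trace(\phi(a)v(a))$ depends only on the cohomology class $[v]$: if $v(a)=\xi-\Ad(\phi(a))\xi$ for some $\xi\in\mathfrak{sl}_2\R$, then cyclicity gives $\trace(\phi(a)\cdot\phi(a)\xi\phi(a)^{-1})=\trace(\phi(a)\xi)$, so the two terms cancel. The only mild subtlety in the whole argument is the sign ambiguity inherent to traces on $\psl$: a different local lift flips both $\varepsilon$ and the sign of $\trace(\phi(a)v(a))$, so the vanishing statement is lift-independent. I do not expect any serious obstacle here; the lemma is essentially one application of the chain rule, with the only care going into selecting the local $\textnormal{SL}(2,\R)$-lift consistently.
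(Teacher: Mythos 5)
Your proposal is correct and follows essentially the same route as the paper: realize $[v]$ by a smooth path of representations, lift to $\textnormal{SL}(2,\R)$, differentiate the identity $\trace(\phi_t(a))=\pm 2\cos(\vartheta_a(t)/2)$, and use that $\sin(\vartheta_a([\phi])/2)\neq 0$ since $\vartheta_a([\phi])\in(0,2\pi)$. The only difference is your extra verification that $\trace(\phi(a)v(a))$ is unchanged under coboundaries, a sanity check the paper leaves implicit.
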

\begin{proof}
Consider a smooth path $[\phi_t]$ inside $\smrep$ with $[\phi_0]=[\phi]$ and whose tangent vector at $t=0$ is $[v]$. Let $\vartheta_a(t):=\vartheta_a([\phi_t])$. By definition of the exterior derivative:
\[
(d\vartheta_{a})_{[\phi]}([v])=\vartheta_a'(0).
\]

We choose smooth lifts in SL$(2,\R)$ of $\phi_t(a)\in \psl$ which we also denote by $\phi_t(a)$. Since the trace is conjugacy invariant and  $\phi_t(a)$ is conjugate to $\rot_{\vartheta_a(t)}$, by definition of the function $\vartheta_a$ (see \eqref{eq:hamiltonian_fct}), it follows that
\[
2\cos(\vartheta_a(t)/2)=\pm\trace(\phi_t(a)).
\]
Applying a derivative at $t=0$ we get
\[
-2\vartheta_a'(0)\sin(\vartheta_a(0)/2)=\pm \trace(v(a)\phi(a)).
\]
Since $\vartheta_a(0)\in(0,2\pi)$ by definition of $\vartheta_a$, it follows that $\sin(\vartheta_a(0)/2)\neq 0$ and thus 
\[
\vartheta_a'(0)=0\quad \Longleftrightarrow \quad \trace(\phi(a)v(a))=0.
\]
\end{proof}

The next computation concerns the Hamiltonian vector fields $X_{b_1},\ldots,X_{b_{n-3}}$. It is convenient to introduce the following convention. Let us first fix an index $i\in\{1,\ldots,n-3\}$ with the understanding that we are working towards the proof of Lemma \ref{lem:key_lemma}. 

\begin{conv}\label{conv}
Anytime we write $[\phi]\in\smrep$ below, we assume that $\phi$ is a representative of $[\phi]$ such that the unique fixed point of $\phi(b_i)$ in the upper half-plane is the complex unit. Such a representative always exists because $\psl$ acts transitively on the upper half-plane.
\end{conv}

For convenience, we introduce the following notation
\[
\Xi:= \begin{pmatrix}
0 & 1\\
-1& 0
\end{pmatrix}\in \mathfrak{sl}_2\R.
\]
Note that $\Xi$ also belongs to SL$(2,\R)$ and projects to $\rot_\pi$ inside $\psl$. Recall moreover that
\[
\rot_t=\pm\begin{pmatrix}
  \cos(t/2) & \sin(t/2)  \\
  -\sin(t/2) & \cos(t/2)  
 \end{pmatrix}=\pm\exp\left(t/2\,\Xi\right).
\]

\begin{lem}\label{lem:Ham_vf}
The Hamiltonian vector field $X_{b_i}$ at $[\phi]\in\smrep$ is represented by the parabolic cocycle 
\[
X_{b_i}([\phi])\colon c_j\mapsto\left\{\begin{array}{ll}
0, & j=1,\ldots,i+1,\\
\Xi-\Ad(\phi(c_j))\Xi,& j=i+2,\ldots,n.
\end{array}\right.
\]
\end{lem}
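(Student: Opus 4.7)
The plan is to parametrize the full $\Phi_{b_i}$-orbit of $[\phi]$ by an explicit smooth family of representations and then read off the Hamiltonian vector field by differentiation at $t = 0$. The input is the single-time formula \eqref{eq:twist_flow} together with Convention \ref{conv}.

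First, I would argue that the orbit admits a smooth family of representatives $\phi_t$ of $\Phi_{b_i}^t([\phi])$ of the shape
\[
\phi_t(c_j) = \begin{cases} \phi(c_j), & j \leq i+1, \\ \zeta_t\, \phi(c_j)\, \zeta_t^{-1}, & j \geq i+2, \end{cases}
\]
where $t \mapsto \zeta_t$ is a one-parameter subgroup of $\psl$. This is the general shape of Goldman's twist flow associated to an $\Ad$-invariant function of $\phi(b_i)$; see \cite{Gol84}. Formula \eqref{eq:twist_flow} then pins down $\zeta_t$ at the single time $t_0 := \vartheta_{b_i}([\phi])/2$ via $\zeta_{t_0} = \phi(b_i)$. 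Convention \ref{conv} forces $\phi(b_i) = \exp(t_0\,\Xi)$, so one-parameter subgroup rigidity yields $\zeta_t = \exp(t\,\Xi)$ for every $t \in \R$.

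Next, I would differentiate at $t = 0$ to extract the cocycle representing $X_{b_i}([\phi])$. By definition, $v(c_j) = \frac{d}{dt}\big|_{t=0}\bigl[\phi_t(c_j)\,\phi(c_j)^{-1}\bigr]$. For $j \leq i+1$ this vanishes identically, while for $j \geq i+2$ the Leibniz rule combined with $\frac{d}{dt}\big|_{t=0}\zeta_t = \Xi$ produces $\Xi - \Ad(\phi(c_j))\Xi$, exactly the stated formula. To check parabolicity, note that the cocycle condition \eqref{eq:cocycle} is automatic from differentiating $\phi_t(xy) = \phi_t(x)\phi_t(y)$, and the coboundary conditions on the peripheral generators are built in by choosing $\xi_j = 0$ for $j \leq i+1$ and $\xi_j = \Xi$ for $j \geq i+2$. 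The only nontrivial consistency check is compatibility with $c_1 \cdots c_n = 1$: a telescoping computation via the cocycle rule collapses $v(c_1 \cdots c_n)$ to $\Ad(\phi(c_1 \cdots c_{i+1}))\Xi - \Xi$, and this vanishes because $\phi(c_1 \cdots c_{i+1}) = \phi(b_i)^{-1}$ sits inside the one-parameter subgroup $\exp(\R\,\Xi)$ and therefore commutes with $\Xi$.

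\textbf{The main obstacle} is really the first step. Formula \eqref{eq:twist_flow} records the flow at the single time $t_0$ only, and upgrading this to the claim $\zeta_t = \exp(t\,\Xi)$ for every $t$ uses in an essential way the fact that the Hamiltonian flow of an $\Ad$-invariant function of $\phi(b_i)$ acts on the ``other side'' by conjugation by a one-parameter subgroup of $\psl$. Once this structural fact is in hand, the remaining steps are direct computations with the cocycle formalism.
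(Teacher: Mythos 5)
Your proof takes essentially the same route as the paper: the entire content of the lemma is the full-time twist-flow formula $\Phi_{b_i}^t([\phi])(c_j)=\rot_{2t}\,\phi(c_j)\,\rot_{2t}^{-1}$ for $j\geq i+2$ (identity for $j\leq i+1$), followed by differentiation at $t=0$, and your differentiation, coboundary conditions, and telescoping check against $c_1\cdots c_n=1$ are all correct. The difference is that the paper simply quotes the full-time formula from \cite[Prop.~3.3]{DeTh16} (it is recorded as \eqref{eq:ham_flow}), whereas you try to reconstruct it from the single-time formula \eqref{eq:twist_flow}, and that reconstruction has one weak link: ``one-parameter subgroup rigidity'' fails here, because a one-parameter subgroup of $\psl$ is not determined by its value at a single time. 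Since $\exp(\pi\Xi)=-I$ projects to the identity in $\psl$, the subgroups $t\mapsto\exp\big(t(1+k\pi/t_0)\Xi\big)$ for every $k\in\Z$ all pass through $\exp(t_0\Xi)=\phi(b_i)$ at $t=t_0$, so knowing $\zeta_{t_0}$ does not pin down the generator. The correct normalization must instead be read off from the definition of the twist flow: the generator of the conjugating subgroup is the variation function of $\vartheta$ evaluated at $\phi(b_i)$, which under Convention \ref{conv} is $\Xi$ — and computing exactly this is the content of \cite[Prop.~3.3]{DeTh16}, the reference the paper leans on.
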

\begin{proof}
The action of twist flow $\Phi_{b_i}$ on $[\phi]$ was computed in 
\cite[Prop. 3.3]{DeTh16}:
\begin{equation}\label{eq:ham_flow}
\Phi_{b_i}^t([\phi])\colon c_j\mapsto\left\{\begin{array}{ll}
\phi(c_j), & j=1,\ldots,i+1,\\
\rot_{2t}\phi(c_j)\rot_{2t}^{-1},& j=i+2,\ldots,n.
\end{array}\right.
\end{equation}
Observe that \eqref{eq:ham_flow} is a generalization of \eqref{eq:twist_flow} which is the special case $t=\vartheta_{b_i}([\phi])/2$. The Hamiltonian flow $\Phi_{b_i}$ and the vector field $X_{b_i}$ are related by
\[
\Phi_{b_i}^t([\phi])(c_j)=\exp\big(tX_{b_i}([\phi])(c_j)\big)\phi(c_j).
\]
So, $X_{b_i}([\phi])(c_j)=0$ for $j=1,\ldots,i+1$, and for $j=i+2,\ldots,n$ we compute
\begin{align*}
X_{b_i}([\phi])(c_j)&=\deriv \Phi_{b_i}^t([\phi])(c_j)\cdot  \phi(c_j)^{-1}\\
&=\Xi-\Ad(\phi(c_j))\Xi.
\end{align*}
For the last equality we used \eqref{eq:ham_flow} and $\deriv \rot_{2t}=\Xi$.
\end{proof}

We combine Lemma \ref{lem:Ham_vf} and the cocycle formula \eqref{eq:cocycle} to evaluate the parabolic cocycle $X_{b_i}([\phi])$ at $d_i=c_{i+2}^{-1}c_{i+1}^{-1}$:
\begin{align}
X_{b_i}([\phi])(d_i)&=X_{b_i}([\phi])(c_{i+2}^{-1})+\Ad(\phi(c_{i+2}^{-1}))\underbrace{X_{b_i}([\phi])(c_{i+1}^{-1})}_{=0}\nonumber\\
&=\Xi-\Ad(\phi(c_{i+2}^{-1}))\Xi.\label{eq:Hamiltonian_vf}
\end{align}

\subsection{A reformulation of the Key Lemma}
We make use of the previous computations to reformulate what it means for the Poisson bracket of $\vartheta_{b_i}$ and $\vartheta_{d_i}$ to vanish.
\begin{lem}\label{lem:trace}
The Poisson bracket $\{\vartheta_{b_i},\vartheta_{d_i}\}$ vanishes at $[\phi]\in\smrep$ if and only if
\[
\trace\left(\Xi\cdot \phi(c_{i+2}^{-1})\phi(c_{i+1}^{-1})\right)=\trace\left(\Xi\cdot \phi(c_{i+1}^{-1})\phi(c_{i+2}^{-1})\right).
\]
\end{lem}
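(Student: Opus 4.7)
The plan is to unpack the Poisson bracket, reduce the vanishing to a trace condition via Lemma \ref{lem:ext_deriv}, substitute the explicit formula \eqref{eq:Hamiltonian_vf} for the Hamiltonian cocycle, and finally exploit the cyclic invariance of the trace.

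First, by the definition \eqref{eq:poisson_bracket} of the Poisson bracket,
\[
\{\vartheta_{b_i},\vartheta_{d_i}\}([\phi]) = (d\vartheta_{d_i})_{[\phi]}\bigl(X_{b_i}([\phi])\bigr).
\]
By Lemma \ref{lem:ext_deriv} applied to $a=d_i$ and to the tangent vector $[v]=X_{b_i}([\phi])$, this derivative vanishes if and only if
\[
\trace\bigl(\phi(d_i)\,X_{b_i}([\phi])(d_i)\bigr)=0.
\]
(Strictly speaking, Lemma \ref{lem:ext_deriv} is stated for the parabolic cocycle representative; the trace only depends on the cohomology class because $\phi(d_i)$ commutes with any coboundary contribution up to a term of vanishing trace, but I would verify this quickly using that $\trace([\xi,\phi(a)]\phi(a))=0$ for any $\xi$.)

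Next, I substitute the two explicit expressions available: formula \eqref{eq:Hamiltonian_vf} gives
\[
X_{b_i}([\phi])(d_i)=\Xi-\Ad(\phi(c_{i+2}^{-1}))\,\Xi,
\]
and by definition $\phi(d_i)=\phi(c_{i+2}^{-1})\phi(c_{i+1}^{-1})$. Plugging in,
\[
\trace\bigl(\phi(d_i)X_{b_i}([\phi])(d_i)\bigr)
=\trace\bigl(\phi(c_{i+2}^{-1})\phi(c_{i+1}^{-1})\Xi\bigr)
-\trace\bigl(\phi(c_{i+2}^{-1})\phi(c_{i+1}^{-1})\phi(c_{i+2}^{-1})\Xi\phi(c_{i+2})\bigr).
\]

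The last step is pure trace bookkeeping: applying the cyclic property to the second summand, the leftmost factor $\phi(c_{i+2}^{-1})$ is cancelled by the trailing $\phi(c_{i+2})$, yielding $\trace(\phi(c_{i+1}^{-1})\phi(c_{i+2}^{-1})\Xi)$. Rewriting both summands with $\Xi$ brought to the front (again by cyclicity), the vanishing of $\{\vartheta_{b_i},\vartheta_{d_i}\}([\phi])$ becomes exactly
\[
\trace\bigl(\Xi\cdot\phi(c_{i+2}^{-1})\phi(c_{i+1}^{-1})\bigr)
=\trace\bigl(\Xi\cdot\phi(c_{i+1}^{-1})\phi(c_{i+2}^{-1})\bigr),
\]
as required. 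The only non-routine point is the well-definedness remark above (that the trace condition in Lemma \ref{lem:ext_deriv} descends to cohomology here), but this is straightforward since $\phi(d_i)$ enters linearly and coboundaries of the form $\xi-\Ad(\phi(d_i))\xi$ contribute $\trace(\phi(d_i)\xi)-\trace(\xi\phi(d_i))=0$.
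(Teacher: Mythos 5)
Your proof is correct and follows essentially the same route as the paper: unwind the Poisson bracket via \eqref{eq:poisson_bracket}, apply Lemma \ref{lem:ext_deriv} to the cocycle $X_{b_i}([\phi])(d_i)=\Xi-\Ad(\phi(c_{i+2}^{-1}))\Xi$ from \eqref{eq:Hamiltonian_vf}, and finish with cyclic invariance of the trace. Your side remark verifying that the trace criterion is independent of the cocycle representative is correct and a welcome addition the paper leaves implicit.
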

\begin{proof}
Combining Lemma \ref{lem:ext_deriv} and \eqref{eq:Hamiltonian_vf} it follows that $\{\vartheta_{b_i},\vartheta_{d_i}\}([\phi])=0$ if and only if
\[
\trace\big(\phi(d_i)(\Xi-\Ad(\phi(c_{i+2}^{-1}))\Xi)\big)=0.
\]
Because the trace is invariant under conjugation, and since $\Ad(\phi(c_{i+2}))\phi(d_i)=\phi(c_{i+1}^{-1})\phi(c_{i+2}^{-1})$, the latter is equivalent to
\[
\trace\left(\Xi\cdot \phi(d_i)\right) =\trace\left(\Xi\cdot \phi(c_{i+1}^{-1})\phi(c_{i+2}^{-1})\right)
\]
which proves the lemma.
\end{proof}

Consider an arbitrary point $[\phi_t]:=\Phi_{b_i}^t([\phi])$ on the $\Phi_{b_i}$-orbit of $[\phi]\in\smrep$. Thanks to \eqref{eq:ham_flow}, Lemma \ref{lem:trace} implies that $\{\vartheta_{b_i},\vartheta_{d_i}\}([\phi_t])=0$ if and only if
\begin{align}
&\trace\left(\Xi\cdot\rot_{2t}\phi(c_{i+2}^{-1})\rot_{2t}^{-1}\phi(c_{i+1}^{-1})\right)\nonumber\\
=&\trace\left(\Xi\cdot\phi(c_{i+1}^{-1})\rot_{2t}\phi(c_{i+2}^{-1})\rot_{2t}^{-1}\right).\label{eq:bof}
\end{align}
What Lemma \ref{lem:key_lemma} claims is that \eqref{eq:bof} is satisfied for at most two different values of $t\in[0,\pi)$, provided that $[\phi]$ belongs to $\mu^{-1}(\ring\Delta)$. 

We now intend to compute \eqref{eq:bof} further in terms of the representation $\phi$. Let us introduce the following notation
\[
\phi(c_{i+2}^{-1})=:\pm\begin{pmatrix}
a & b\\
c & d
\end{pmatrix}, \quad \phi(c_{i+1}^{-1})=:\pm\begin{pmatrix}
x & y\\
z & w
\end{pmatrix}.
\]

\begin{lem}\label{lem:last_stone}
The relation \eqref{eq:bof} holds if and only if
\begin{align}
&\cos(2t)\left((a-d)(y+z)-(b+c)(x-w)\right)\nonumber \\
=&\sin(2t)\left((x-w)(d-a)-(b+c)(y+z)\right).\label{eq:ouff}
\end{align}
\end{lem}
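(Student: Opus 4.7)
My plan is to prove \eqref{eq:ouff} by a direct, but structured, matrix calculation starting from \eqref{eq:bof}. First I would exploit the cyclicity of the trace to rewrite the difference of the two sides of \eqref{eq:bof} as
\[
\trace\!\left(\Xi\cdot R M R^{-1}\cdot N\right)-\trace\!\left(\Xi\cdot N\cdot R M R^{-1}\right)
=\trace\!\left([\Xi,N]\cdot R M R^{-1}\right),
\]
where $R:=\rot_{2t}$, $M:=\phi(c_{i+2}^{-1})=\pm\begin{pmatrix}a&b\\c&d\end{pmatrix}$ and $N:=\phi(c_{i+1}^{-1})=\pm\begin{pmatrix}x&y\\z&w\end{pmatrix}$. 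Using cyclicity once more, this equals $\trace\!\bigl(\Ad(R^{-1})[\Xi,N]\cdot M\bigr)$, so that \eqref{eq:bof} is equivalent to the vanishing of this single trace.

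Next I would compute the commutator explicitly:
\[
[\Xi,N]=\Xi N-N\Xi=\begin{pmatrix}y+z & w-x\\ w-x & -(y+z)\end{pmatrix}.
\]
This is a symmetric traceless matrix, which I would decompose in the standard basis
\[
H=\begin{pmatrix}1&0\\0&-1\end{pmatrix},\qquad S=\begin{pmatrix}0&1\\1&0\end{pmatrix}
\]
of symmetric traceless matrices, giving $[\Xi,N]=(y+z)\,H+(w-x)\,S$. The key structural input is that $\Ad(R^{-1})$ fixes $\Xi$ and acts as a rotation on the plane spanned by $H$ and $S$. A short computation from $R=\exp(t\Xi)$ shows
\[
\Ad(R^{-1})H=\cos(2t)\,H+\sin(2t)\,S,\qquad \Ad(R^{-1})S=-\sin(2t)\,H+\cos(2t)\,S,
\]
so that
\[
\Ad(R^{-1})[\Xi,N]=\bigl((y+z)\cos(2t)-(w-x)\sin(2t)\bigr)H+\bigl((y+z)\sin(2t)+(w-x)\cos(2t)\bigr)S.
\]

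Finally I would conclude by pairing this with $M$ via the trace. The elementary identities $\trace(HM)=a-d$ and $\trace(SM)=b+c$ turn the vanishing condition into
\[
\bigl((y+z)\cos(2t)-(w-x)\sin(2t)\bigr)(a-d)+\bigl((y+z)\sin(2t)+(w-x)\cos(2t)\bigr)(b+c)=0,
\]
which is exactly \eqref{eq:ouff} after substituting $w-x=-(x-w)$ and grouping coefficients of $\cos(2t)$ and $\sin(2t)$. The main obstacle is really just bookkeeping: there is no conceptual difficulty, but the computation is easy to fumble unless one organizes it via the $\Xi$-equivariant decomposition of $\mathfrak{sl}_2\R$ and the explicit rotation formula for $\Ad(\rot_{2t})$, which replaces a four-fold product of $2\times 2$ matrices by two clean linear operations.
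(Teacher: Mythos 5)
Your proof is correct, and it takes a genuinely different route from the paper's. The paper proves Lemma \ref{lem:last_stone} by brute force: it multiplies out the two four-fold products of $2\times 2$ matrices in \eqref{eq:bof}, extracts the off-diagonal entries, and collects terms in $\cos^2 t$, $\sin^2 t$ and $\cos t\sin t$ by hand. Your argument replaces this with two applications of cyclicity of the trace, reducing \eqref{eq:bof} to the vanishing of $\trace\bigl(\Ad(R^{-1})[\Xi,N]\cdot M\bigr)$, and then exploits the fact that $[\Xi,N]$ lies in the plane of symmetric traceless matrices, on which $\Ad(\rot_{2t}^{-1})$ acts as a rotation by angle $2t$; this is exactly why the answer organizes itself into coefficients of $\cos(2t)$ and $\sin(2t)$, a structure the paper's computation only reveals a posteriori. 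All the ingredients check out: $[\Xi,N]=(y+z)H+(w-x)S$, the rotation formulas for $\Ad(R^{-1})$ on $H$ and $S$, and the pairings $\trace(HM)=a-d$, $\trace(SM)=b+c$ are all correct, and substituting $w-x=-(x-w)$ does recover \eqref{eq:ouff} exactly. One small slip: the difference of the two sides of \eqref{eq:bof} is $\trace\bigl((N\Xi-\Xi N)RMR^{-1}\bigr)=-\trace\bigl([\Xi,N]RMR^{-1}\bigr)$, not $+\trace\bigl([\Xi,N]RMR^{-1}\bigr)$; since the lemma is a vanishing condition and \eqref{eq:ouff} is invariant under multiplying both sides by $-1$, this sign is immaterial here, but it is worth fixing. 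Your approach buys conceptual transparency and robustness against bookkeeping errors; the paper's buys self-containedness at the cost of an opaque page of algebra.
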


The proof of Lemma \ref{lem:last_stone} is a foolish computation and is postponed to end of this section. For now, we prove Lemma \ref{lem:key_lemma} under the assumption that Lemma \ref{lem:last_stone} holds.

\begin{proof}[Proof of Lemma \ref{lem:key_lemma}]
The function $\tan(2t)$ is two-to-one for $t\in[0,\pi)$. So, if \eqref{eq:ouff} holds for at least three different values of $t$ in $[0,\pi)$, then one must have 
\begin{equation}\label{eq:system}
\left\{\begin{array}{lll}
(a-d)(y+z)&=&(b+c)(x-w), \text{ and}\\
(x-w)(d-a)&=&(b+c)(y+z).
\end{array}\right.
\end{equation}
We claim that the system \eqref{eq:system} only has trivial solutions over the real numbers, namely
\[
\left\{\begin{array}{lll}
a=d&\text{ and } &b=-c,\text{ or}\\
x=w&\text{ and }&y=-z.
\end{array}\right.
\]
Indeed, if $a=d$, then $b=-c$, or $x=w$ and $y=-z$.  Similarly, if $x=w$, then $y=-z$, or $a=d$ and $b=-c$. The case $y=-z$ leads to the analogue conclusion. If $a\neq d$, $x\neq w$ and $y\neq -z$, then
\[
\frac{y+z}{x-w}=\frac{b+c}{a-d}=-\frac{x-w}{y+z}
\]
and so $(x-w)^2+(y+z)^2=0$. This is a contradiction.

In the first case, when $a=d$ and $b=-c$, $\phi(c_{i+2}^{-1})$ commutes with $\Xi$, and thus it commutes with $\rot_\theta$ for all $\theta$. Hence, if $i\neq n-3$, then \eqref{eq:ham_flow} implies $\Phi_{b_i}^\theta([\phi])=\Phi_{b_{i+1}}^\theta([\phi])$ for every $\theta$ , and if $i=n-3$, then \eqref{eq:ham_flow} implies $\Phi_{b_{n-3}}^\theta([\phi])=[\phi]$ for every $\theta$. Both conclusions are in contradiction with the assumption that $[\phi]\in\mu^{-1}(\ring\Delta)$.

In the second case, when $x=w$ and $y=-z$, $\phi(c_{i+1}^{-1})$ commutes with $\Xi$. An analogue argument to the previous case leads to a contradiction.

Therefore, there are at most two different $t_1,t_2\in[0,\pi)$ that satisfy \eqref{eq:ouff}. Moreover, if they exist, then $\vert t_2-t_1\vert=\pi/2$ and the corresponding points on the $\Phi_{b_i}$-orbit are diametrically opposite. This concludes the proof of Lemma \ref{lem:key_lemma}.
\end{proof}

\subsection{A last computation} It remains to prove Lemma \ref{lem:last_stone} to conclude the proof of Theorem \ref{thm:ergodicity}.

\begin{proof}[Proof of Lemma \ref{lem:last_stone}]
To simplify the notation we will abbreviate $\co =\cos (t)$ and $\s=\sin(t)$. We first compute the left-hand side of \eqref{eq:bof}, namely
\begin{equation}\label{eq:trace_lhs}
\trace\left(\begin{pmatrix}
0 & 1\\
-1& 0
\end{pmatrix}\begin{pmatrix}
\co & \s\\
-\s & \co
\end{pmatrix}
\begin{pmatrix}
a & b\\
c & d
\end{pmatrix}
\begin{pmatrix}
\co & -\s\\
\s & \co
\end{pmatrix}
\begin{pmatrix}
x & y\\
z & w
\end{pmatrix}\right).
\end{equation}
First, note that
\[
\begin{pmatrix}
\co & \s\\
-\s& \co
\end{pmatrix}
\begin{pmatrix}
a & b\\
c& d
\end{pmatrix}=\begin{pmatrix}
a\co +c\s & b\co +d\s\\
-a\s+c\co & -b\s+d\co
\end{pmatrix}
\]
and
\[
\begin{pmatrix}
\co & -\s\\
\s& \co
\end{pmatrix}
\begin{pmatrix}
x & y\\
z & w
\end{pmatrix}=\begin{pmatrix}
x\co-z\s & y\co-w\s\\
x\s+z\co & y\s+w\co
\end{pmatrix}.
\]
Hence we have
\[
\begin{pmatrix}
\co & \s\\
-\s& \co
\end{pmatrix}
\begin{pmatrix}
a & b\\
c& d
\end{pmatrix}
\begin{pmatrix}
\co & -\s\\
\s& \co
\end{pmatrix}
\begin{pmatrix}
x & y\\
z & w
\end{pmatrix}=\begin{pmatrix}
\star & l_1\\
l_2 & \star
\end{pmatrix},
\]
where
\[
\left\{\begin{array}{l}
l_1= ay\co^2-aw\co\s+cy\co\s-cw\s^2+by\co\s+bw\co^2+dy\s^2+dw\co\s, \\
l_2=-ax\co\s+az\s^2+cx\co^2-cz\co\s-bx\s^2-bz\co\s+dx\co\s+dz\co^2.
\end{array}\right.
\]
So, \eqref{eq:trace_lhs} is equal to $l_2-l_1$. We now compute the right-hand side of \eqref{eq:bof}, namely
\begin{equation}\label{eq:trace_rhs}
\trace\left(\begin{pmatrix}
0 & 1\\
-1& 0
\end{pmatrix}\begin{pmatrix}
x & y\\
z & w
\end{pmatrix}
\begin{pmatrix}
\co & \s\\
-\s& \co
\end{pmatrix}
\begin{pmatrix}
a & b\\
c& d
\end{pmatrix}
\begin{pmatrix}
\co & -\s\\
\s& \co
\end{pmatrix}\right).
\end{equation}
Because the trace is conjugacy invariant, \eqref{eq:trace_rhs} is equal to 
\[
\trace\left(\begin{pmatrix}
\co & -\s\\
\s& \co
\end{pmatrix}
\begin{pmatrix}
0 & 1\\
-1& 0
\end{pmatrix}\begin{pmatrix}
x & y\\
z & w
\end{pmatrix}
\begin{pmatrix}
\co & \s\\
-\s& \co
\end{pmatrix}
\begin{pmatrix}
a & b\\
c& d
\end{pmatrix}
\right).
\]
Since $\Xi$ and $\rot_{2t}$ commute, \eqref{eq:trace_rhs} is further equal to
\[
\trace\left(
\begin{pmatrix}
0 & 1\\
-1& 0
\end{pmatrix}\begin{pmatrix}
\co & -\s\\
\s& \co
\end{pmatrix}
\begin{pmatrix}
x & y\\
z & w
\end{pmatrix}
\begin{pmatrix}
\co & \s\\
-\s& \co
\end{pmatrix}
\begin{pmatrix}
a & b\\
c& d
\end{pmatrix}
\right).
\]
Now, we can use the previous computations to get
\[
\begin{pmatrix}
\co & -\s\\
\s& \co
\end{pmatrix}
\begin{pmatrix}
x & y\\
z & w
\end{pmatrix}
\begin{pmatrix}
\co & \s\\
-\s& \co
\end{pmatrix}
\begin{pmatrix}
a & b\\
c& d
\end{pmatrix}=\begin{pmatrix}
\star & r_1\\
r_2 & \star
\end{pmatrix},
\]
where
\[
\left\{\begin{array}{l}
r_1= bx\co^2+dx\co\s-bz\co\s-dz\s^2-by\co\s+dy\co^2+bw\s^2-dw\co\s, \\
r_2= ax\co\s+cx\s^2+az\co^2+cz\co\s-ay\s^2+cy\co\s-aw\co\s+cw\co^2.
\end{array}\right.
\]
So, \eqref{eq:trace_rhs} is equal to $r_2-r_1$.

Therefore, \eqref{eq:bof} holds if and only if $l_2-l_1=r_2-r_1$. It holds $l_2-l_1=r_2-r_1$ if and only if
\begin{align*}
&-ax\co\s+az\s^2+cx\co^2-cz\co\s-bx\s^2-bz\co\s+dx\co\s+dz\co^2\\
&-ay\co^2+aw\co\s-cy\co\s+cw\s^2-by\co\s-bw\co^2-dy\s^2-dw\co\s\\
=&ax\co\s+cx\s^2+az\co^2+cz\co\s-ay\s^2+cy\co\s-aw\co\s+cw\co^2\\
&-bx\co^2-dx\co\s+bz\co\s+dz\s^2+by\co\s-dy\co^2-bw\s^2+dw\co\s.
\end{align*}
We group all the terms containing $\co\s$ on the left-hand side and all the terms containing $\co^2$ and $\s^2$ on the other side:
\begin{align*}
&2\co\s(-ax-cz-bz+dx+aw-cy-by-dw)\\
=&(\co^2-\s^2)(-cx+az+ay+cw-bx-dz-dy+bw).
\end{align*}
We factorize and use that $\co^2-\s^2=\cos(2t)$ and $2\co\s=\sin(2t)$:
\begin{align*}
&\sin(2t)\big((x-w)(d-a)-(b+c)(y+z)\big)\\
=&\cos(2t)\big((a-d)(y+z)-(b+c)(x-w)\big).
\end{align*}
This finishes the proof of the lemma.
\end{proof}

\bibliographystyle{amsalpha}
\bibliography{literature}

\providecommand{\bysame}{\leavevmode\hbox to3em{\hrulefill}\thinspace}
\providecommand{\MR}{\relax\ifhmode\unskip\space\fi MR }
\providecommand{\MRhref}[2]{%
  \href{http://www.ams.org/mathscinet-getitem?mr=#1}{#2}
}
\providecommand{\href}[2]{#2}
\begin{thebibliography}{GHJW97}

\bibitem[BG99]{BeGo99}
Robert~L. Benedetto and William~M. Goldman, \emph{The topology of the relative
  character varieties of a quadruply-punctured sphere}, Experiment. Math.
  \textbf{8} (1999), no.~1, 85--103. \MR{1685040}

\bibitem[BIW10]{BIW10}
Marc Burger, Alessandra Iozzi, and Anna Wienhard, \emph{Surface group
  representations with maximal {T}oledo invariant}, Ann. of Math. (2)
  \textbf{172} (2010), no.~1, 517--566. \MR{2680425}

\bibitem[Bou20]{Bo20}
Yohann Bouilly, \emph{{On the Torelli group action on compact character
  varieties}}, arXiv:2001.08397v1, 2020.

\bibitem[CdS01]{Can06}
Ana Cannas~da Silva, \emph{Lectures on symplectic geometry}, Lecture Notes in
  Mathematics, vol. 1764, Springer-Verlag, Berlin, 2001. \MR{1853077}

\bibitem[DT19]{DeTh16}
Bertrand Deroin and Nicolas Tholozan, \emph{Supra-maximal representations from
  fundamental groups of punctured spheres to {$\textnormal{PSL}(2,\mathbb
  R)$}}, Ann. Sci. \'{E}c. Norm. Sup\'{e}r. (4) \textbf{52} (2019), no.~5,
  1305--1329. \MR{4057784}

\bibitem[EW11]{EiWa11}
Manfred Einsiedler and Thomas Ward, \emph{{Ergodic theory with a view towards
  number theory}}, {Graduate Texts in Mathematics}, vol. 259, Springer-Verlag
  London, Ltd., London, 2011. \MR{2723325}

\bibitem[FM12]{FaMa12}
Benson Farb and Dan Margalit, \emph{A primer on mapping class groups},
  Princeton Mathematical Series, vol.~49, Princeton University Press,
  Princeton, NJ, 2012. \MR{2850125}

\bibitem[FM13]{FuMa13}
Louis Funar and Julien March\'{e}, \emph{The first {J}ohnson subgroups act
  ergodically on {$\textnormal{SU}_2$}-character varieties}, J. Differential
  Geom. \textbf{95} (2013), no.~3, 407--418. \MR{3128990}

\bibitem[Gha16]{Gha16}
Selim Ghazouani, \emph{{Mapping class group dynamics on
  {$\textnormal{Aff}(\mathbb C)$}-characters}}, Ann. Inst. Fourier (Grenoble)
  \textbf{66} (2016), no.~2, 729--751.

\bibitem[GHJW97]{GHJW97}
K.~Guruprasad, J.~Huebschmann, L.~Jeffrey, and A.~Weinstein, \emph{Group
  systems, groupoids, and moduli spaces of parabolic bundles}, Duke Math. J.
  \textbf{89} (1997), no.~2, 377--412. \MR{1460627}

\bibitem[GLX21]{GLX21}
William~M. Goldman, Sean Lawton, and Eugene~Z. Xia, \emph{The mapping class
  group action on {$\textnormal{SU}(3)$}-character varieties}, Ergodic Theory
  Dynam. Systems \textbf{41} (2021), no.~8, 2382--2396. \MR{4283277}

\bibitem[Gol]{Gol97}


\bibitem[Gol84]{Gol86}
William~M. Goldman, \emph{The symplectic nature of fundamental groups of
  surfaces}, Adv. in Math. \textbf{54} (1984), no.~2, 200--225. \MR{762512}

\bibitem[Gol86]{Gol84}
\bysame, \emph{Invariant functions on {L}ie groups and {H}amiltonian flows of
  surface group representations}, Invent. Math. \textbf{85} (1986), no.~2,
  263--302. \MR{846929}

\bibitem[Gol06]{Gol06}
\bysame, \emph{Mapping class group dynamics on surface group representations},
  189--214. \MR{2264541}

\bibitem[GW17]{GhWi17}
Tyrone Ghaswala and Rebecca~R. Winarski, \emph{The liftable mapping class group
  of balanced superelliptic covers}, New York J. Math. \textbf{23} (2017),
  133--164. \MR{3611078}

\bibitem[GX11]{GoXi09}
William~M. Goldman and Eugene~Z. Xia, \emph{Ergodicity of mapping class group
  actions on {${\textnormal{SU}}(2)$}-character varieties}, 591--608.
  \MR{2807844}

\bibitem[GX12]{GoXi11}
\bysame, \emph{Action of the {J}ohnson-{T}orelli group on representation
  varieties}, Proc. Amer. Math. Soc. \textbf{140} (2012), no.~4, 1449--1457.
  \MR{2869130}

\bibitem[Hue95]{Hu95}
Johannes Huebschmann, \emph{Symplectic and {P}oisson structures of certain
  moduli spaces. {I}}, Duke Math. J. \textbf{80} (1995), no.~3, 737--756.
  \MR{1370113}

\bibitem[Lab08]{Lab08}
Fran\c{c}ois Labourie, \emph{Cross ratios, {A}nosov representations and the
  energy functional on {T}eichm\"{u}ller space}, Ann. Sci. \'{E}c. Norm.
  Sup\'{e}r. (4) \textbf{41} (2008), no.~3, 437--469. \MR{2482204}

\bibitem[Mon16]{Mon17}
Gabriele Mondello, \emph{Topology of representation spaces of surface groups in
  {${\textnormal{PSL}}_2(\mathbb R)$} with assigned boundary monodromy and
  nonzero {E}uler number}, Pure Appl. Math. Q. \textbf{12} (2016), no.~3,
  399--462. \MR{3767231}

\bibitem[MPT15]{MPT15}
Sara Maloni, Fr{\'e}d{\'e}ric Palesi, and Ser~Peow Tan, \emph{{On the character
  variety of the four-holed sphere}}, Groups Geom. Dyn. \textbf{9} (2015),
  no.~3, 737--782.

\bibitem[MPY21]{MPY18}
Sara Maloni, Fr\'{e}d\'{e}ric Palesi, and Tian Yang, \emph{On type-preserving
  representations of thrice punctured projective plane group}, 2021,
  pp.~421--457. \MR{4333027}

\bibitem[MW16]{MaWo16}
Julien March\'{e} and Maxime Wolff, \emph{The modular action on
  {$\textnormal{PSL}_2(\mathbb{R})$}-characters in genus 2}, Duke Math. J.
  \textbf{165} (2016), no.~2, 371--412. \MR{3457677}

\bibitem[MW19]{MaWo15}
\bysame, \emph{Six-point configurations in the hyperbolic plane and ergodicity
  of the mapping class group}, Groups Geom. Dyn. \textbf{13} (2019), no.~2,
  731--766. \MR{3950649}

\bibitem[Pal11]{Pal11}
Frederic Palesi, \emph{{Ergodic actions of mapping class groups on moduli
  spaces of representations of non-orientable surfaces}}, Geom. Dedicata
  \textbf{151} (2011), 107--140.

\bibitem[PX02]{PiXi02}
Doug Pickrell and Eugene~Z. Xia, \emph{Ergodicity of mapping class group
  actions on representation varieties. {I}. {C}losed surfaces}, Comment. Math.
  Helv. \textbf{77} (2002), no.~2, 339--362. \MR{1915045}

\bibitem[PX03]{PiXi03}
\bysame, \emph{Ergodicity of mapping class group actions on representation
  varieties. {II}. {S}urfaces with boundary}, Transform. Groups \textbf{8}
  (2003), no.~4, 397--402. \MR{2015257}

\bibitem[Wie06]{Wie06}
Anna Wienhard, \emph{The action of the mapping class group on maximal
  representations}, Geom. Dedicata \textbf{120} (2006), 179--191. \MR{2252900}

\bibitem[Yan16]{Yang16}
Tian Yang, \emph{{On type-preserving representations of the four-punctured
  sphere group}}, Geom. Topol. \textbf{20} (2016), no.~2, 1213--1255.

\end{thebibliography}

\end{document}